\newcommand{\esum}{\ensuremath{\sum_{e \in E}}}
\newcommand{\vsum}{\ensuremath{\sum_{v \in V}}}
\newcommand{\gout}{\gamma_{+}}
\newcommand{\gin}{\gamma_{-}}
\newcommand{\din}{D^{-}}
\newcommand{\dout}{D^{+}}
\newcommand{\rdegree}{\mathcal{D}}
\newcommand{\rdin}{\rdegree^{-}}
\newcommand{\rdout}{\rdegree^{+}}
\newcommand{\pearson}{r}
\newcommand{\brho}{\ensuremath{\hat{\pearson}}}
\newcommand{\spearman}{\rho}
\newcommand{\spearmanaverage}{\overline{\spearman}}
\newcommand{\kendall}{\tau}
\newtheorem{defn}{Definition}[section]
\newtheorem{lem}[defn]{Lemma}
\newtheorem{thm}[defn]{Theorem}
\title{Degree-degree dependencies in directed networks with heavy-tailed degrees}
\author{Pim van der Hoorn\footnote{University of Twente, w.l.f.vanderhoorn@utwente.nl}, 
				Nelly Litvak\footnote{University of Twente, n.litvak@utwente.nl}}
\date{\today}
\begin{document}

\normalem

\maketitle
\begin{abstract}
In network theory, Pearson's correlation coefficients are most commonly used to measure the degree 
assortativity of a network. We investigate the behavior of these coefficients in the setting of 
directed networks with heavy-tailed degree sequences. We prove that for graphs where the in- and 
out-degree sequences satisfy a power law with realistic parameters, Pearson's correlation 
coefficients converge to a non-negative number in the infinite network size limit. We propose 
alternative measures for degree-degree dependencies in directed networks based on Spearman's rho 
and Kendall's tau. Using examples and calculations on the Wikipedia graphs for nine different
languages, we show why these rank correlation measures are more suited for measuring degree 
assortativity in directed graphs with heavy-tailed degrees. 
\end{abstract}

\noindent \textbf{Keywords} degree assortativity, degree-degree correlations, scale free directed networks, power laws, 
rank correlations.

\section{Introduction}

In the analysis of the topology of complex networks a feature that is often studied is the degree-degree
dependency, also called degree assortativity of the network. A network is called assortative, when nodes 
with high degree have a preference to be connected to nodes of similar large degree. When nodes with 
large degree have a connection preference for nodes with low degree the network is said to be 
disassortative. A measure for degree assortativity was first given for undirected networks by Newman
\cite{Newman2002}, which corresponds to Pearson's correlation coefficient of the degrees at the ends of 
a random edge in the network. A similar definition for directed networks was introduced in~\cite{Newman2003} 
and later adopted for analysis of directed complex networks in~\cite{Foster2010} and~\cite{Piraveenan2012}. 

Degree assortativity in networks has been analyzed in a variety of scientific fields such as neuroscience, 
molecular biology, information theory and social network sciences and has been found to influence several 
properties of a network. In~\cite{Kaltenbrunner2011} and \cite{Laniado2011} degree-degree correlations are 
used to investigate the structure of collaboration networks of a social news sharing website and Wikipedia 
discussion pages, respectively. Neural networks with high assortativity seem to behave more efficiently 
under the influence of noise~\cite{Franciscis2011} and information content has been shown to depend on the 
absolute value of the degree assortativity~\cite{Piraveenan2009}. The effects of degree-degree dependencies 
on epidemic spreading have been studied in percolation theory \cite{Boguna2002,Vazquez2003} and it has been 
shown, for instance, that the epidemic threshold depends on these correlations. Degree assortativity 
is used in the analysis of networks under attack, e.g. P2P networks \cite{Srivastava2012,srivastava2011}. 
Networks with high degree assortativity seem to be less stable under attack,~\cite{Brede2005}. In the case of 
directed networks, recent research~\cite{liu2014impact} has shown that degree-degree dependencies can 
influence the rate of consensus in directed social networks like Twitter.

Recently it has been shown~\cite{Litvak2012,Litvak2013} that for undirected networks of which the degree 
sequence satisfies a power law distribution with exponent $\gamma \in (1, 3)$, Pearson's correlation 
coefficient scales with the network size, converging to a non-negative number in the infinite network size 
limit. Because most real world networks have been reported to be scale free with exponent in $(1, 3)$, 
c.f.~\cite[Table II]{Albert2002, Newman2003a}, this could then explain why large networks are rarely 
classified as disassortative. In~\cite{Litvak2012,Litvak2013} a new measure, corresponding to Spearman's 
rho~\cite{Spearman1904}, has been proposed as an alternative.

In this paper we will extend the analysis in~\cite{Litvak2012} to the setting of directed networks. Here we
have to consider four types of degree-degree dependencies, depending on the choice for in- or out-degree on 
either side of an edge. Our message is, similar to that of~\cite{Litvak2012}, that Pearson's correlation 
coefficients are size biased and produce undesirable results, hence we should look for other means to measure 
degree-degree dependencies. 

We consider networks where the in- and out-degree sequences have a power law distribution. We will give 
conditions on the exponents of the in- and out-degree sequences for which the assortativity measures defined 
in~\cite{Foster2010} and~\cite{Piraveenan2012} converge to a non-negative number in the infinite network size 
limit. This result is a strong argument against the use of Pearson's correlation coefficients for measuring 
degree-degree dependencies in such directed networks. To strengthen this argument we also give examples which 
clearly show that the values given by Pearson's correlation coefficients do not represent the true dependency 
between the degrees, which it is supposed to measure. As an alternative we propose correlation measures based 
on Spearman's rho~\cite{Spearman1904} and Kendall's tau~\cite{Kendall1938}. These measures are based on the 
ranking of the degrees rather than their value and hence do not exhibit the size bias observed in Pearson's 
correlation coefficients. We will give several examples where the difference between these three measures is 
shown. We also include an example for which one of the four Pearson's correlation coefficients converges to a 
random variable in the infinite network size limit and therefore will obviously produce uninformative results. 
Finally we calculate all four degree-degree correlations on the Wikipedia network for nine different languages 
using all the assortativity measures proposed in this paper.

This paper is structured as follows. In Section~\ref{sec:definitions} we introduce notations. Pearson's 
correlation coefficients are introduced in Section~\ref{sec:pearson} and a convergence theorem is given for 
these measures. We introduce the rank correlations Spearman's rho and Kendall's tau for degree-degree dependencies
in Section~\ref{sec:rankcorrelations}. Example graphs that illustrate the difference between the three 
measures are presented in Section~\ref{sec:examples} and the degree-degree correlations for the Wikipedia 
graphs are presented in Section~\ref{sec:experiments}. Finally, in Section~\ref{sec:discussion} we briefly 
discuss the results and there interpretation.

\section{Definitions and notations}\label{sec:definitions}

We start with the formal definition of the problem and introduce the notations that will be used 
throughout the paper.
 
\subsection{Graphs, vertices and degrees}\label{ssec:graphsvertices}

We will denote by $G = (V, E)$ a directed graph with vertex set $V$ and edge set $E \subseteq V \times 
V$. For an edge $e \in E$, we denote its source by $e_\ast$ and its target by $e^\ast$. With each 
directed graph we associate two functions $D^+, D^- : V \to \N$ where $D^+(v) := |\{e \in E | e_\ast = 
v\}|$ is the out-degree of the vertex $v$ and $D^-(v) := |\{e \in E | e^\ast = v\}|$ the in-degree. 
When considering sequences of graphs, we denote by $G_n = (V_n, E_n)$ an element of the sequence 
$\seq{G}$. We will further use subscripts to distinguish between the different graphs in the sequence. 
For instance, $\dout_n$ and $\din_n$ will denote the out- and in-degree functions of the graph $G_n$, 
respectively. 

\subsection{Four types of degree-degree dependencies}\label{ssec:fourcorrelations}

In this paper we are interested in measuring dependencies between the degrees at both sides of an edge. 
That is, we measure the relation between two vectors $X$ and $Y$ as a function of the edges $e \in E$ 
corresponding to the degrees of $e_\ast$ and $e^\ast$, respectively. In the undirected case this is 
called the degree assortativity. In the directed setting however, we can consider any combination of 
the two degree types resulting in four types of degree-degree dependencies, illustrated in Figure 
\ref{fig:fourcorrelations}.  

From Figure~\ref{fig:fourcorrelations} one can already observe some interesting features of these 
dependencies. For instance, in the Out/In case the edge that we consider contributes to the degrees 
on both sides. We will later see that for this reason the Out/In dependency in fact generalizes the 
undirected case. More precisely, our result for the Out/In dependencies generalizes the result from 
\cite{Litvak2013} when we transform from the undirected to the directed case by making every edge 
bi-directional.
 
For the other three dependency types we observe that there is always at least one side where the 
considered edge does not contribute towards the degree on that side. We will later see that for these 
dependency types the dependency of the in- and out-degree of a vertex will play a role.

\tikzstyle{vertex}=[fill, circle, minimum size=4pt]
\tikzstyle{edge}=[style=thick, color=black]

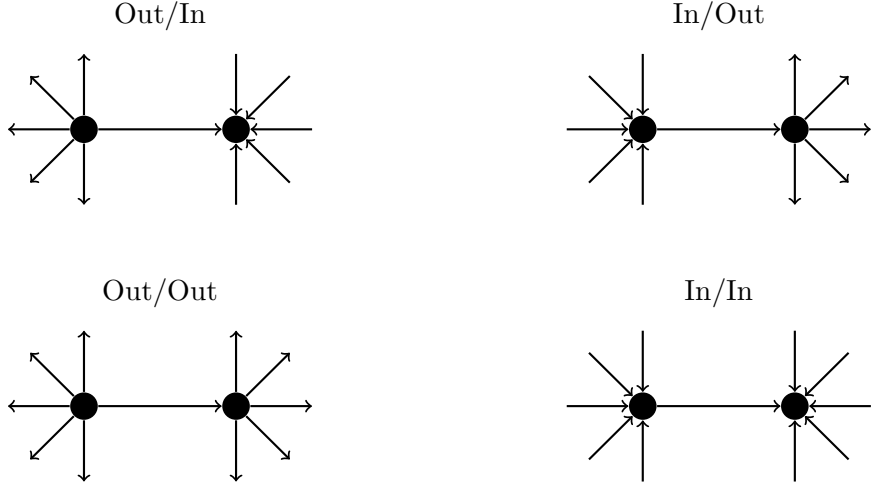
\begin{figure}%
	\begin{subfigure}{0.5\textwidth}
		\centering
		\begin{tikzpicture}
			\draw node[vertex] (left) at (1,0) {};
			\draw node[vertex] (right) at (3,0) {};
			\draw [->][edge] (left) -- (right);
			\draw [->][edge] (left) -- (1,1);
			\draw [->][edge] (left) -- (0.293,0.707);
			\draw [->][edge] (left) -- (0,0);
			\draw [->][edge] (left) -- (0.293,-0.707);
			\draw [->][edge] (left) -- (1,-1);
			\draw [->][edge] (3,1) -- (right);
			\draw [->][edge] ((3.707,0.707) -- (right);
			\draw [->][edge] (4,0) -- (right);
			\draw [->][edge] (3.707,-0.707) -- (right);
			\draw [->][edge] (3,-1) -- (right);
			\draw node at (2,1.5) {Out/In};
		\end{tikzpicture}
	\end{subfigure}%
	\begin{subfigure}{0.5\textwidth}
		\centering
		\begin{tikzpicture}
			\draw node[vertex] (left) at (1,0) {};
			\draw node[vertex] (right) at (3,0) {};
			\draw [->][edge] (left) -- (right);
			\draw [->][edge] (1,1) -- (left);
			\draw [->][edge] (0.293,0.707) -- (left);
			\draw [->][edge] (0,0) -- (left);
			\draw [->][edge] (0.293,-0.707) -- (left);
			\draw [->][edge] (1,-1) -- (left);
			\draw [->][edge] (right) -- (3,1);
			\draw [->][edge] (right) -- (3.707,0.707);
			\draw [->][edge] (right) -- (4,0);
			\draw [->][edge] (right) -- (3.707,-0.707);
			\draw [->][edge] (right) -- (3,-1);
			\draw node at (2,1.5) {In/Out};
		\end{tikzpicture}
	\end{subfigure}

	\vspace{0.8cm}

	\begin{subfigure}{0.5\textwidth}
		\centering
		\begin{tikzpicture}
			\draw node[vertex] (left) at (1,0) {};
			\draw node[vertex] (right) at (3,0) {};
			\draw [->][edge] (left) -- (right);
			\draw [->][edge] (left) -- (1,1);
			\draw [->][edge] (left) -- (0.293,0.707);
			\draw [->][edge] (left) -- (0,0);
			\draw [->][edge] (left) -- (0.293,-0.707);
			\draw [->][edge] (left) -- (1,-1);
			\draw [->][edge] (right) -- (3,1);
			\draw [->][edge] (right) -- (3.707,0.707);
			\draw [->][edge] (right) -- (4,0);
			\draw [->][edge] (right) -- (3.707,-0.707);
			\draw [->][edge] (right) -- (3,-1);
			\draw node at (2,1.5) {Out/Out};
		\end{tikzpicture}
	\end{subfigure}%
	\begin{subfigure}{0.5\textwidth}
		\centering
		\begin{tikzpicture}
			\draw node[vertex] (left) at (1,0) {};
			\draw node[vertex] (right) at (3,0) {};
			\draw [->][edge] (left) -- (right);
			\draw [->][edge] (1,1) -- (left);
			\draw [->][edge] (0.293,0.707) -- (left);
			\draw [->][edge] (0,0) -- (left);
			\draw [->][edge] (0.293,-0.707) -- (left);
			\draw [->][edge] (1,-1) -- (left);
			\draw [->][edge] (3,1) -- (right);
			\draw [->][edge] ((3.707,0.707) -- (right);
			\draw [->][edge] (4,0) -- (right);
			\draw [->][edge] (3.707,-0.707) -- (right);
			\draw [->][edge] (3,-1) -- (right);
			\draw node at (2,1.5) {In/In};
		\end{tikzpicture}
	\end{subfigure}

\caption{Four degree-degree dependency types}%
\label{fig:fourcorrelations}%
\end{figure}

\section{Pearson's correlation coefficient}\label{sec:pearson}

Among degree-degree dependency measures, the measure proposed by Newman~\cite{Newman2002,Newman2003} has 
been widely used. This measure is the statistical estimator for the Pearson correlation coefficient of 
the degrees on both sides of a random edge. However, for undirected networks with heavy tailed degrees 
with exponent $\gamma \in (1, 3)$ it was proved \cite{Litvak2013} that this measure converges, in the 
infinite size network limit, to a non-negative number. Therefore, in these cases, Pearson's correlation 
coefficient is not able to correctly measure negative degree-degree dependencies. In this section we will 
extend this result to directed networks proving that also here Pearson's correlation coefficients are not 
the right tool to measure degree-degree dependencies. 

Let us consider Pearson's correlation coefficients as in \cite{Newman2002,Newman2003}, adjusted to the 
setting of directed graphs as in~\cite{Foster2010,Piraveenan2012}. This will constitute four formulas 
which we combine into one. Take $\alpha, \beta \in \{+, -\}$, that is, we let $\alpha$ and $\beta$ index 
the type of degree (out- or in-degree). Then we get the following expression for the four Pearson's 
correlation coefficients:

\begin{equation}
		\pearson_\alpha^\beta(G) = \frac{1}{\sigma_\alpha(G) \sigma^\beta(G)}\left( \frac{1}{|E|} 
		\esum D^\alpha(e_\ast)D^\beta(e^\ast) - \frac{1}{|E|^2}\esum D^\alpha(e_\ast) \esum D^\beta(e^\ast)\right)
		\label{eq:negpearsonedge},
\end{equation}
where
\begin{align}
		&\sigma_\alpha(G) = \sqrt{\frac{1}{|E|} \esum D^\alpha(e_\ast)^2 
		- \frac{1}{|E|^2}\left( \esum D^\alpha(e_\ast) \right)^2} \text{ and} \label{eq:sigalphaedge} \\
		&\sigma^\beta(G) = \sqrt{\frac{1}{|E|} \esum D^\beta(e^\ast)^2 
		- \frac{1}{|E|^2}\left( \esum D^\beta(e^\ast) \right)^2}. \label{eq:sigbetaedge}
\end{align}
Here we utilize the notations for the source and target of an edge by letting the superscript index 
denote the specific degree type of the target $e^\ast$ and the subscript index the degree type of the 
source $e_\ast$. For instance $\pearson^-_+$ denotes the Pearson correlation coefficient for the Out/In 
relation. 

It is convenient to rewrite the summations over edges to summations over vertices by observing that 
\[
	\esum D^\alpha(e_\ast)^k = \vsum \dout D^\alpha(v)^k
\] 
and similarly 
\[
	\esum D^\alpha(e^\ast)^k = \vsum \din D^\alpha(v)^k
\] for all $k > 0$. Plugging this into~\eqref{eq:negpearsonedge}-\eqref{eq:sigbetaedge} we arrive at 
the following definition.

\begin{defn}\label{def:pearson}
Let $G = (V, E)$ be a directed graph and let $\alpha, \beta \in \{+, -\}$. Then the Pearson's $\alpha$
-$\beta$ correlation coefficient is defined by
\begin{equation}
	\pearson_\alpha^\beta(G) = \frac{1}{\sigma_\alpha(G) \sigma^\beta(G)} \frac{1}{|E|} 
		\esum D^\alpha(e_\ast)D^\beta(e^\ast) - \brho_\alpha^\beta(G) \label{eq:pearsonedges},
\end{equation}
where
\begin{align}
		&\brho_\alpha^\beta(G) = \frac{1}{\sigma_\alpha(G) \sigma^\beta(G)} \frac{1}{|E|^2} 
		\vsum \dout(v) D^\alpha(v) \vsum \din(v) D^\beta(v), \label{eq:negpearsonvertex} \\
		&\sigma_\alpha(G) = \sqrt{\frac{1}{|E|} \vsum \dout(v) D^\alpha(v)^2 
		- \frac{1}{|E|^2}\left( \vsum \dout(v) D^\alpha(v) \right)^2}, \label{eq:alphavariance} \\
		&\sigma^\beta(G) = \sqrt{\frac{1}{|E|} \vsum \din(v) D^\beta(v)^2 
		- \frac{1}{|E|^2}\left( \vsum \din(v) D^\beta(v) \right)^2}. \label{eq:betavariance}
\end{align}
\end{defn}

Just as in the undirected case, c.f. \cite{Litvak2012, Litvak2013}, the wiring of the network only 
contributes to the positive part of~\eqref{eq:pearsonedges}. All other terms are completely determined 
by the in- and out-degree sequences. This fact enables us to analyze the behavior of $\pearson_\alpha
^\beta(G)$, see Section~\ref{ssec:convergence}. Observe also that in contrast to undirected graphs, in 
the directed case the correlation between the in- and out-degrees of a vertex can play a role, take 
for instance $\alpha = -$ and $\beta = +$.

Note that in general $\pearson_\alpha^\beta(G)$ might not be well defined, for either $\sigma_\alpha(G)$ 
or $\sigma^\beta(G)$ might be zero, for example, when $G$ is a directed cyclic graph of arbitrary size. 
From equations~\eqref{eq:sigalphaedge} and~\eqref{eq:sigbetaedge} it follows that $\sigma_\alpha(G)$ and 
$\sigma^\beta(G)$ are the variances of $X$ and $Y$, where $X = D^\alpha(e_\ast)$ and $Y = D^\beta(e^\ast)$, 
$e\in E$, with probability $1/|E|$. Thus, $\sigma_\alpha(G) \ne 0$ is only possible if $D^\alpha(v) \ne 
D^\alpha(w)$ for some $v, w \in V$. Moreover, $v$ and $w$ must have non-zero out-degree for at least one 
such pair $v,w$, so that $D^\alpha(v)$ and $D^\alpha(w)$ are counted when we traverse over edges. This 
argument is formalized in the next lemma, which provides necessary and sufficient conditions so that 
$\sigma_\alpha(G)$, $\sigma^\beta(G) \ne 0$.

\begin{lem}\label{lem:degreesumbounds}
Let $G = (V, E)$ be a graph and take $\alpha, \beta \in \{+, -\}$. Then the following holds:
\begin{equation}
		\frac{1}{|E|}\left(\sum_{v \in V} D^\alpha(v)D^\beta(v)\right)^2 
		\le \sum_{v \in V} D^\alpha(v)D^\beta(v)^2
	\label{eq:degreesumbound}
\end{equation}
and strict inequality holds if and only if there exits distinct $v, w \in V$ such that $D^\alpha(v)$, $D^\alpha(w) > 0$ 
and $D^\beta(v) \ne D^\beta(w)$.
\end{lem}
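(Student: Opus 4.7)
The plan is to recognize the inequality as a straightforward application of the Cauchy--Schwarz inequality after a square-root splitting of one of the factors. Specifically, I would write
\[
\sum_{v \in V} D^\alpha(v) D^\beta(v) = \sum_{v \in V} \sqrt{D^\alpha(v)} \cdot \left(\sqrt{D^\alpha(v)}\, D^\beta(v)\right),
\]
and then apply Cauchy--Schwarz to the two sequences $a_v = \sqrt{D^\alpha(v)}$ and $b_v = \sqrt{D^\alpha(v)}\, D^\beta(v)$, yielding
\[
\left(\sum_{v \in V} D^\alpha(v) D^\beta(v)\right)^2 \le \left(\sum_{v \in V} D^\alpha(v)\right)\left(\sum_{v \in V} D^\alpha(v) D^\beta(v)^2\right).
\]

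The first factor on the right collapses, since $\sum_{v \in V} D^+(v) = \sum_{v \in V} D^-(v) = |E|$, so $\sum_{v \in V} D^\alpha(v) = |E|$ regardless of whether $\alpha = +$ or $\alpha = -$. Dividing both sides by $|E|$ then gives exactly \eqref{eq:degreesumbound}. This handles the inequality itself with almost no calculation.

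For the strictness claim, I would invoke the standard equality condition in Cauchy--Schwarz: equality holds precisely when the sequences $a_v$ and $b_v$ are proportional, i.e., there exists $\lambda \in \R$ such that $\sqrt{D^\alpha(v)}\, D^\beta(v) = \lambda \sqrt{D^\alpha(v)}$ for all $v \in V$. This is equivalent to the statement that $D^\beta(v) = \lambda$ for every $v$ with $D^\alpha(v) > 0$, that is, $D^\beta$ is constant on the support of $D^\alpha$. Negating this condition yields exactly the characterization in the lemma: strict inequality holds if and only if there exist distinct $v, w \in V$ with $D^\alpha(v), D^\alpha(w) > 0$ and $D^\beta(v) \ne D^\beta(w)$.

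No step here is a genuine obstacle; the only care needed is to make the two directions of the equivalence explicit (one vertex alone with positive $D^\alpha$ is not enough to force equality, which is why the statement asks for two distinct vertices), and to note that the identity $\sum_v D^\alpha(v) = |E|$ is what makes the Cauchy--Schwarz bound reduce to the stated form.
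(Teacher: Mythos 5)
Your proof is correct. It is worth noting, though, that you and the paper are really proving the same inequality by two presentations of the same underlying fact: the paper does not cite Cauchy--Schwarz but instead expands the difference
\[
|E|\sum_{v} D^\alpha(v)D^\beta(v)^2 - \Bigl(\sum_{v} D^\alpha(v)D^\beta(v)\Bigr)^2
= \frac{1}{2}\sum_{w}\sum_{v \ne w} D^\alpha(w)D^\alpha(v)\bigl(D^\beta(w)-D^\beta(v)\bigr)^2,
\]
which is exactly the Lagrange-identity proof of Cauchy--Schwarz specialized to your sequences $a_v=\sqrt{D^\alpha(v)}$, $b_v=\sqrt{D^\alpha(v)}\,D^\beta(v)$. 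Both arguments hinge on the same observation that $\sum_v D^\alpha(v)=|E|$. What the paper's expansion buys is that the strictness characterization is immediate and self-contained: the right-hand side is a sum of manifestly nonnegative terms, and it is positive precisely when some term $D^\alpha(w)D^\alpha(v)(D^\beta(w)-D^\beta(v))^2$ with $v\ne w$ is positive, which is verbatim the condition in the lemma. Your route is shorter but outsources the equality case to the linear-dependence condition in Cauchy--Schwarz; that is fine, but you should be slightly careful with the degenerate cases there (if $(a_v)$ is identically zero, equality holds for trivial reasons, and one should take the proportionality in the form $b=\lambda a$ rather than $a=\mu b$ since $(b_v)$ may vanish while $(a_v)$ does not). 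You handle the one-vertex-support caveat correctly. Either proof is acceptable; the paper's is arguably preferable in context only because it delivers the ``if and only if'' without any side discussion.
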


\begin{proof}
Recall that $|E| = \sum_{v \in V} D^\alpha(v)$ for any $\alpha \in \{+, -\}$. Then we have:
\begin{align*}
	&|E|\sum_{v \in V} D^\alpha(v)D^\beta(v)^2 - \left(\sum_{v \in V} D^\alpha(v)D^\beta(v)\right)^2 \\
	&= \sum_{w \in V} \sum_{v \in V\setminus w} D^\alpha(w)D^\alpha(v)D^\beta(v)^2 
	- D^\alpha(w)D^\beta(w)D^\alpha(v)D^\beta(v) \\
	&= \frac{1}{2}\sum_{w \in V} \sum_{v \in V\setminus w} D^\alpha(w)D^\alpha(v)\left(
	D^\beta(w)^2 - 2D^\beta(w)D^\beta(v) + D^\beta(v)^2 \right) \\
	&= \frac{1}{2}\sum_{w \in V} \sum_{v \in V\setminus w} D^\alpha(w)D^\alpha(v)
	\left(D^\beta(w) - D^\beta(v)\right)^2 \ge 0,
\end{align*}
which proves~\eqref{eq:degreesumbound}. From the last line one easily sees that strict inequality holds
if and only if there exits distinct $v, w \in V$ such that $D^\alpha(v)$, $D^\alpha(w) > 0$ and 
$D^\beta(v) \ne D^\beta(w)$.
\end{proof}   

\subsection{Convergence of Pearson's correlation coefficients}\label{ssec:convergence}

In this section we will prove that Pearson's correlation coefficients~\eqref{eq:pearsonedges}, 
calculated on sequences of growing graphs satisfying rather general conditions, converge to a 
non-negative value. We start by recalling the definition of big theta.

\begin{defn}\label{def:bigtheta}
Let $f, g : \N \to \R_{> 0}$ be positive functions. Then $f = \Theta(g)$ if there exist $k_1, k_2 \in 
\R_{> 0}$ and an $N \in \N$ such that for all $n \ge N$
\[
	k_1 g(n) \le f(n) \le k_2 g(n).
\]
When we have two sequences $\seq{a}$ and $\seq{b}$ we write $a_n = \Theta(b_n)$ for $\seq{a} = \Theta(\seq{b})$.
\end{defn}

Next, we will provide the conditions that our sequence of graphs needs to satisfy and prove the result. 
These conditions are based on properties of i.i.d. sequences of regularly varying random variables, 
which are often used to model scale-free distributions. We will provide a more thorough motivation of 
the chosen conditions in Section~\ref{ssec:motivation}. From here on we denote by $x \vee y$ and $x 
\wedge y$ the maximum and minimum of $x$ and $y$, respectively. 

\begin{defn}\label{def:graphsequencespace}
	For $\gin, \gout \in \R_{> 0}$ we denote by $\mathscr{G}_{\gin \gout}$ the space of all sequences of graphs 
	$\seq{G}$ with the following properties:
	\begin{enumerate}[\upshape G1]
		\item $|V_n| = n$.
		\item There exists a $N \in \N$ such that for all $n \ge N$ there exist $v, w \in V_n$ with 
		$D_n^\alpha(v)$, $D_n^\alpha(w) > 0$ and $D_n^\alpha(v) \ne D_n^\alpha(w)$, for all 
		$\alpha \in \{+, -\}$.
		\item
		For all $p, q \in \R_{> 0}$,
		\begin{equation*}
			\sum_{v \in V_n} D_n^+(v)^p D^-_n(v)^q = \Theta(n^{p/\gout \vee q/\gin \vee 1}).
		\end{equation*}
		\item
		For all $p, q \in \R_{> 0}$, if $p < \gout$ and $q < \gin$ then 
		\begin{equation*}
			\nlim \frac{1}{n}\sum_{v \in V_n} D_n^+(v)^p D^-_n(v)^q := d(p, q) \in (0, \infty).
		\end{equation*}
		Where the limits are such that for all $a, b \in \N$, $k, m > 1$ with $1/k + 1/m = 1$, 
		$a + p < \gout$ and $b + q < \gin$ we have,
		\[
			d(a, b)^{\frac{1}{m}}d(p, q)^{\frac{1}{k}} > d(\frac{a}{m} + \frac{p}{k}, \frac{b}{m} + \frac{q}{k}).
		\]
	\end{enumerate}
\end{defn}

Now we are ready to give the convergence theorem for Pearson's correlation coefficients, Definition~\ref{def:pearson}.

\begin{thm}\label{thm:convergencepearson}
	Let $\alpha, \beta \in \{+, -\}$. Then there exists an area $A_\alpha^\beta \subseteq \R^2$ such that 
	for $(\gout, \gin) \in A_\alpha^\beta$ and $\seq{G} \in \mathscr{G}_{\gin \gout}$,
	\[
		\nlim \brho^\beta_\alpha(G_n) = 0
	\]
	and hence any limit point of $\pearson^\beta_\alpha(G_n)$ is non-negative.
\end{thm}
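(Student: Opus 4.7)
The plan is to reduce the claim to showing $\brho_\alpha^\beta(G_n)\to 0$ on $A_\alpha^\beta$. The first summand $\tfrac{1}{\sigma_\alpha\sigma^\beta |E_n|}\esum D^\alpha(e_\ast)D^\beta(e^\ast)$ of~\eqref{eq:pearsonedges} is a ratio of non-negative quantities, so $\pearson_\alpha^\beta(G_n)\ge -\brho_\alpha^\beta(G_n)$, and $\liminf_{n\to\infty} \pearson_\alpha^\beta(G_n)\ge 0$ (hence non-negativity of any limit point) follows immediately once $\brho_\alpha^\beta(G_n)\to 0$ is established.

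To estimate $\brho_\alpha^\beta(G_n)$ I would read off the polynomial order in $n$ of every sum appearing in~\eqref{eq:negpearsonvertex}--\eqref{eq:betavariance} directly from assumption G3. In the regime $\gout,\gin>1$ we have $|E_n|=\Theta(n)$, and sums of the form $\sum_{v\in V_n}\dout_n(v)^i\din_n(v)^j$ for the small $i,j$ that appear in~\eqref{eq:negpearsonvertex}--\eqref{eq:betavariance} all have explicit exponents dictated by G3. Substituting these yields an explicit exponent $N(\alpha,\beta,\gout,\gin)$ for the numerator of $\brho_\alpha^\beta$ and an exponent $M(\alpha,\beta,\gout,\gin)$ for the denominator $|E_n|^2\sigma_\alpha(G_n)\sigma^\beta(G_n)$. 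The region $A_\alpha^\beta$ is then defined as the open subset of $\R^2_{>0}$ on which $M>N$; on this set $\brho_\alpha^\beta(G_n)=O(n^{N-M})\to 0$ by order comparison alone.

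The main technical obstacle is obtaining a matching \emph{lower} bound on the two variances, each of which is the square root of a difference of two positive terms whose leading orders could coincide and whose leading constants could therefore cancel. When the two terms have strictly different polynomial orders -- the typical situation in the heavy-tailed regime $\gout,\gin\in(1,3)$ where I expect $A_\alpha^\beta$ to live -- G3 alone identifies the dominant one and fixes the order of $\sigma^2$. When they share the same order I would invoke the strict inequality in G4 with $k=m=2$ and with $(a,b)$, $(p,q)$ chosen to match the two sums inside $\sigma_\alpha^2$: this is precisely a Cauchy--Schwarz-type gap ruling out cancellation of the leading constants, with Lemma~\ref{lem:degreesumbounds} providing the qualitative analogue (strict positivity of $\sigma_\alpha^2$) under G2.

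With the variance orders pinned down, checking $M>N$ reduces to a routine four-case bookkeeping over $(\alpha,\beta)\in\{+,-\}^2$, the shape of $A_\alpha^\beta$ differing between the cases because the mixed combinations (e.g.\ Out/In and In/Out) mix $\gout$ and $\gin$ asymmetrically through the vertex-weight $\dout_n$ or $\din_n$ in the sums. The Out/In case is expected to specialise, as anticipated in Section~\ref{ssec:fourcorrelations}, to the undirected result of~\cite{Litvak2013}, which provides both a useful sanity check and a template for the remaining three cases.
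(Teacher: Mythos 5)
Your proposal follows essentially the same route as the paper's proof: reduce to $\brho_\alpha^\beta(G_n)\to 0$ since the first summand of~\eqref{eq:pearsonedges} is non-negative, read off the polynomial orders of all sums from G3 (with $|E_n|=\Theta(n)$), and, in the boundary cases where the two terms inside a variance share the same order, invoke the strict H\"older-type inequality of G4 with $k=m=2$ to rule out cancellation of the leading constants (with Lemma~\ref{lem:degreesumbounds} and G2 supplying the qualitative positivity), before doing the four-case bookkeeping that yields the regions $A_\alpha^\beta$. This matches the paper's argument step for step, including the identification of the only genuinely delicate point.
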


\begin{proof}
Let $\seq{G}$ be an arbitrary sequence of graphs. It is clear that if $\brho^\beta_\alpha(G_n) \to 0$ 
then any limit point of $\pearson^\beta_\alpha(G_n)$ is non-negative. Therefore we only need to prove 
the first statement. To this end we define the following sequences,
\begin{align*}
a_n &= \frac{1}{|E_n|} \left(\sum_{v \in V_n} \dout_n(v)D_n^\alpha(v)\right)^2,
&&b_n = \frac{1}{|E_n|} \left(\sum_{v \in V_n} \din_n(v)D_n^\beta(v)\right)^2, \\
c_n &= \sum_{v \in V_n} \dout_n(v)D_n^\alpha(v)^2,
&&d_n = \sum_{v \in V_n} \din_n(v)D_n^\beta(v)^2, \\
\end{align*}
and observe that $\brho_\alpha^\beta(G_n)^2 = a_n b_n/(c_n - a_n)(d_n - b_n)$. Now if $\seq{G} \in 
\mathscr{G}_{\gin \gout}$ then because of G2 and Lemma~\ref{lem:degreesumbounds} there exists an $N \in 
\N$ such that for all $n \ge N$ we have $c_n > a_n$ and $d_n > b_n$, so $\brho^\beta_\alpha(G_n)$ is well-defined
for all $n \ge N$. Next, using G3, we get that	$a_n = \Theta(n^a)$, $b_n = \Theta(n^b)$, $c_n = \Theta(n^c)$ 
and $d_n = \Theta(n^d)$ for certain constants $a$, $b$, $c$ and $d$, which depend on $\gin, \gout$ and the
degree-degree correlation type chosen. Because 
$\brho_\alpha^\beta(G_n) \to 0$ if and only if $\brho_\alpha^\beta(G_n)^2 \to 0$, we need to find sufficient 
conditions for which $a_n b_n/(c_n - a_n)(d_n - b_n) \to 0$. It is clear that either $a < c$ and 
$b_n/(d_n - b_n)$ is bounded or $b < d$ and $a_n/(c_n - a_n)$ is bounded are sufficient.
It turns out that this is exactly the case when either $a < c$ and $b \le d$ or $a \le c$ and $b < d$. We 
will do the analysis for the In/Out degree-degree correlation. The analysis for the other three 
correlation types is similar. Figure \ref{fig:convergenceareas} shows all four areas $A^\beta_\alpha$. 

When $\alpha = -$ and $\beta = +$ we get the following constants
\begin{align*}
 a, b &= 2 \left(\frac{1}{\gout} \vee \frac{1}{\gin} \vee 1\right) - 1 \\
 c &= \left(\frac{1}{\gout} \vee \frac{2}{\gin} \vee 1\right) \\
 d &= \left(\frac{2}{\gout} \vee \frac{1}{\gin} \vee 1\right)
\end{align*}
It is clear that when $1 < \gin, \gout < 2$ then $a < c$ and $b < d$ and hence $\brho^\beta_\alpha \to 0$.
Now if $1 < \gin < 2$ and $\gout \ge 2$ then $a = b = d = 1 < c$. Using G4 we get that
$\nlim d_n/n = d(2, 1)$ and
\begin{align*}
	\nlim \frac{b_n}{n} &= \nlim \frac{\left(\sum_{v \in V_n} \din_n(v)\dout_n(v)\right)^2}{n^2} \frac{n}{|E_n|}\\
	&= \nlim \left(\frac{\sum_{v \in V_n} \din_n(v)\dout_n(v)}{n}\right)^2 
	\left(\frac{\sum_{v \in V_n} \din_n(v)}{n}\right)^{-1}\\
	&= \frac{d(1, 1)^2}{d(0, 1)} < d(2, 1) = \nlim \frac{d_n}{n},
\end{align*}
where, for the last part, we again used G4. From this it follows that $b_n/(d_n - b_n)$ is bounded and so 
$\brho^\beta_\alpha \to 0$. A similar argument applies to the case $\gin \ge 2$ and $1 < \gout < 2$, where the only 
difference is that $a = b = c = 1 < d$, hence 
\[
	A^+_- = \{(x, y) \in \R | 1 < x < 2, \quad y > 1 \} \cup \{(x, y) \in \R | 1 < y < 2, \quad x > 1 \}.
\]
Using similar arguments, we obtain:
\begin{align*}
	A_+^- &= \{(x, y) \in \R^2 | 1 < x < 3, \quad y > 1\} \cup \{(x, y) \in \R^2 | 1 < y < 3, \quad x > 1\}, \\
	A_+^+ &= \{(x, y) \in \R^2 | 1 < x < 3, \quad y > 1\} \text{ and}\\
	A_-^- &= \{(x, y) \in \R^2 | 1 < y < 3, \quad x > 1\}.
\end{align*}
\end{proof}

\begin{figure}[ht]
\centering
\begin{tikzpicture}[scale=0.8]
			\draw (0,0) -- (0,6) node[above] {$\gin$};
			\draw (0,0) -- (6,0) node[right] {$\gout$};
			\draw[fill=black,opacity=0.6] ((1,1) -- (6,1) -- (6,3) -- (3,3) -- (3,6) -- (1,6) -- (1,1);
			\draw (1,0.1) -- (1,-0.1) node[below](1,-0.1) {1};
			\draw (0.1,1) -- (-0.1,1) node[left](-0.1,1) {1};
			\draw (3,0.1) -- (3,-0.1) node[below](3,-0.1){3};
			\draw (0.1,3) -- (-0.1,3) node[left](-0.1,3){3};
			\draw node at (6,6) {\Large $A^-_+$};
\end{tikzpicture}\quad\begin{tikzpicture}[scale=0.8]
		\draw (0,0) -- (0,6) node[above] {$\gamma_{-}$};
		\draw (0,0) -- (6,0) node[right] {$\gamma_{+}$};
		\draw[fill=black,opacity=0.6] (1,1) -- (1,6) -- (2,6) -- (2,2) -- (6,2) 
		-- (6,1) -- (1,1);
		\draw (2,0.1) -- (2,-0.1) node[below](2,-0.1){2};
		\draw (0.1,2) -- (-0.1,2) node[left](-0.1,2){2};
		\draw (1,0.1) -- (1,-0.1) node[below](1,-0.1) {1};
		\draw (0.1,1) -- (-0.1,1) node[left](-0.1,1) {1};
		\draw node at (6,6) {\Large $A^+_-$};
\end{tikzpicture}
\begin{tikzpicture}[scale=0.8]
		\draw (0,0) -- (0,6) node[above] {$\gamma_{-}$};
		\draw (0,0) -- (6,0) node[right] {$\gamma_{+}$};
		\draw[fill=black,opacity=0.6] (1,1) -- (1,6) -- (3,6) -- (3,1) -- (1,1); 
		\draw (1,0.1) -- (1,-0.1) node[below]{1};
		\draw (3,0.1) -- (3,-0.1) node[below]{3};
		\draw (0.1,1) -- (-0.1,1) node[left]{1};
		\draw node at (6,6) {\Large $A^+_+$};
\end{tikzpicture}\quad\begin{tikzpicture}[scale=0.8]
			\draw (0,0) -- (0,6) node[above] {$\gamma_{-}$};
			\draw (0,0) -- (6,0) node[right] {$\gamma_{+}$};
			\draw[fill=black,opacity=0.6] (1,1) -- (6,1) -- (6,3) -- (1,3) -- (1,1);
			\draw (0.1,1) -- (-0.1,1) node[left]{1};
			\draw (0.1,3) -- (-0.1,3) node[left]{3};
			\draw (1,0.1) -- (1,-0.1) node[below]{1};
			\draw node at (6,6) {\Large $A_-^-$};
\end{tikzpicture}
\caption{Four areas $A^\beta_\alpha$, where $r^\beta_\alpha$ converges to a non-negative number.}%
\label{fig:convergenceareas}%
\end{figure}
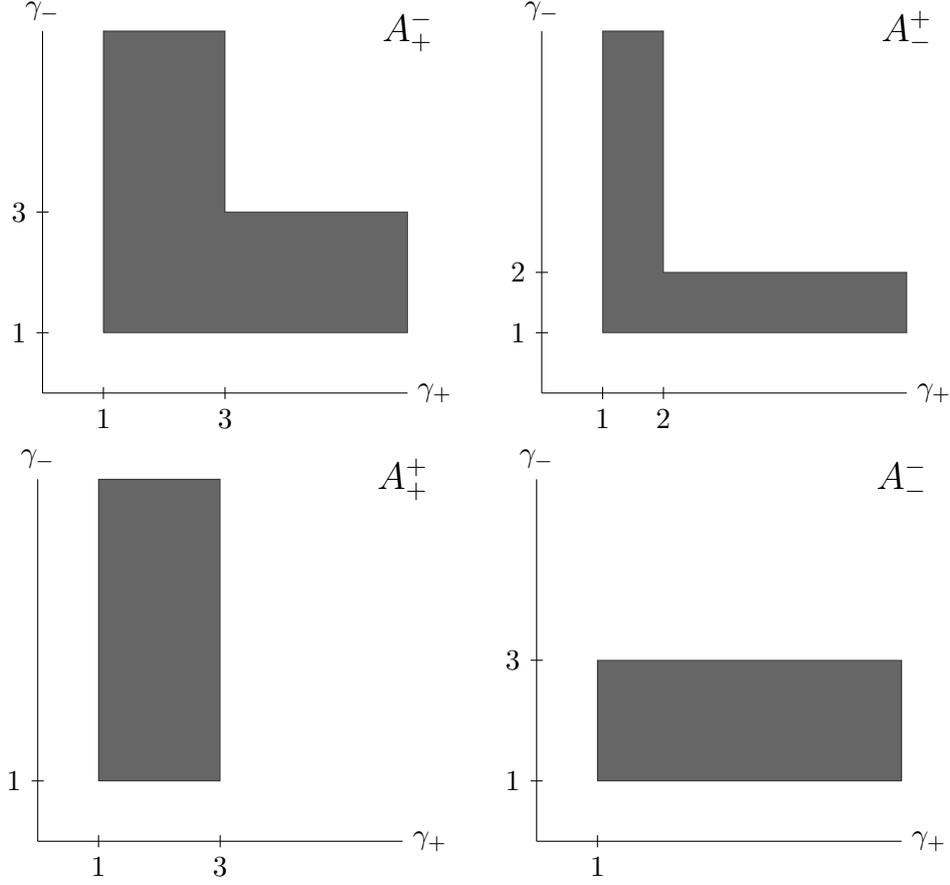

Let us now provide an intuitive explanation for the areas $A_\alpha^\beta$, as depicted in Figure
\ref{fig:convergenceareas}. The key observation is that due to G3 the terms with the highest power 
of either $\dout_n$ or $\din_n$ will dominate in $\brho^\beta_\alpha(G_n)$. Therefore, if these 
moments do not exist, then the denominator will grow at a larger rate then the numerator, hence 
$\brho^\beta_\alpha \to 0$.

Taking $\alpha = + = \beta$, we see that $\din$ only has terms of order one while $\dout$ has terms
up to order three. This explains why $A^+_+ = \{(x, y) \in \R | 1 < x \le 3, y > 1\}$. Area $A_-^-$ is then
easily explained by observing that the expression for $\pearson^-_-(G)$ is obtained from $\pearson^+_+(G)$ 
by interchanging $\dout$ and $\din$.

For the Out/In correlation, i.e. $\alpha = +$ and $\beta = -$, we see from 
equations~\eqref{eq:negpearsonvertex}-\eqref{eq:betavariance} that $\brho^-_+(G)$ splits into a product of two 
terms, each completely determined by either in- or out-degrees,
\[
	\frac{\frac{1}{|E|}\sum_{v \in V} D^\alpha(v)^2}
	{\sqrt{\frac{1}{|E|}\sum_{v \in V} D^\alpha(v)^3 - \frac{1}{|E|^2}\left(\sum_{v \in V} D^\alpha(v)^2\right)^2}},
\]
with $\alpha \in \{+, -\}$.
These terms are of the exact same form as the expression in~\cite{Litvak2012} for the undirected degree-degree
correlation. Because both $\dout$ and $\din$ have terms of order three, one sees that 
\[
	A_+^- = \{(x, y) \in \R^2 | 1 < x < 3, \quad y > 1\} \cup \{(x, y) \in \R^2 | 1 < y < 3, \quad x > 1\}.
\] 
Now take a undirected network and make it directed by replacing each undirected edge with a 
bi-directional edge. Then $\dout(v) = \din(v)$ for all $v \in V$ and hence $\pearson^-_+(G)$ equals the 
expression of equation (3.4) in~\cite{Litvak2012} when we replace $D$ by either $\dout$ or $\din$. 

Theorem~\ref{thm:convergencepearson} has several consequences. First of all, no matter what mechanism 
is used for generating networks, if the conditions of the theorem are satisfied then for large enough 
networks the degree-degree correlations will always be non-negative. This could explain why in most 
large networks strong disassortativity has not been registered. We will present such examples in Section
\ref{sec:examples}. Second, if the underlying model that governs the topology of the network is in line 
with the conditions of the theorem, then one cannot compare networks of different sizes that arise from 
this model. For in this case, the degree-degree correlation coefficients $\pearson^\beta_\alpha$ will 
decrease with the network size.

\subsection{Motivation for $\mathscr{G}_{\gin \gout}$}\label{ssec:motivation}

In this section we will motivate Definition~\ref{def:graphsequencespace}. G1 is easily motivated, 
for we want to consider infinite network size limits. G2 combined with Lemma~\ref{lem:degreesumbounds} 
ensures that from a certain grah size $N$, $\pearson^\beta_\alpha(G_n)$ is always well-defined. 
Conditions G3 and G4 are related to heavy-tailed degree sequences that are modeled using regularly 
varying random variables.

A random variable $X$ is called regularly varying with exponent $\gamma$ if for all $t>0$, $\Prob(X > t) 
= L(t)t^{-\gamma}$ for some slowly varying function $L$, that is $\lim_{t \to \infty} L(tx)/L(t) = 1$ 
for all $x>0$. We write $\mathcal{R}_{-\gamma}$ for the class of all such distribution functions and
write $X \in \mathcal{R}_{-\gamma}$ to denote a regularly varying random variable with exponent $\gamma$. 
For such a random variable $X$ we have that $\Exp{X^p} < \infty$ for all $0 < 
p < \gamma$. 

Through experiments it has been shown that many real world networks, both directed and undirected, 
have degree sequences whose distribution closely resembles a power law distribution, c.f. Table II 
of~\cite{Albert2002} and~\cite{Newman2003a}. Suppose we take two random variables $\rdout \in 
\mathcal{R}_{\gout}$, $\rdin \in \mathcal{R}_{\gin}$ and consider, for each $n$, the degree sequences 
$(D^\pm_n(v))_{v \in V_n}$ as i.i.d. copies of these random variables. Then for all $0 < p <\gout$ and
$0 < q < \gin$
\[
	\nlim \frac{1}{n}\sum_{v \in V_n} D_n^+(v)^p D^-_n(v)^q = \Exp{(\rdout)^p(\rdin)^q}.
\]
Moreover, since $\rdegree^\pm$ is non-degenerate, we have $\Exp{\left(\rdegree^\pm\right)^k} > 
\Exp{\rdegree^\pm}^k$, and thus  by taking $d(p, q) = \Exp{(\rdout)^p(\rdin)^q}$, we get G4 where the
second part follows from H\"older's inequality. Although i.i.d. sequences generated by sampling from 
in- and out-degree distributions do not in general constitute a graphical sequence, it is often the 
case that one can modify this sequence into a graphical sequence preserving i.i.d. properties 
asymptotically. Consider for example \cite{chen2013}, where a directed version of the configuration
model is introduced and it is proven (Theorem 2.4) that the degree sequences are asymptotically 
independent.

The property G3 is associated with the scaling of the sums
$\sum_{v \in V_n} D_n^+(v)^p D^-_n(v)^q$ and is related to the central limit theorem for regularly 
varying random variables. When we model the degrees as i.i.d. copies of independent regularly varying random variables 
$\rdegree^+ \in \mathcal{R}_{-\gout}$, $\rdegree^- \in \mathcal{R}_{-\gin}$ and take $p \ge \gout$ or $q \ge \gin$ 
then  $\sum_{v \in V_n} D_n^+(v)^p D^-_n(v)^q$ is in the domain of attraction of a $\gamma$-stable random 
variable $S(\gamma)$, where $\gamma = (\gout/p \wedge \gin/q)$, c.f. \cite{Cline1986}. This means that 
\begin{equation}
	\frac{1}{a_n} \sum_{v \in V_n} D_n^+(v)^p D^-_n(v)^q \dlim S(\gout/p \wedge \gin/q),\quad \mbox{as $n\to\infty$}
	\label{eq:centrallimit}
\end{equation}
for some sequence $a_n = \Theta(n^{q/\gin \vee p/\gout})$, where $\dlim$ denotes convergence in distribution. 
Informally, one could say that $\sum_{v \in V_n} D_n^+(v)^p D^-_n(v)^q$ scales as $n^{q/\gin \vee p/\gout}$ when 
either the $p$ or $q$ moment does not exist and as $n$ when both moments exist, hence, $\sum_{v \in V_n} D_n^+(v)^p 
D^-_n(v)^q$ scales as $n^{q/\gin \vee p/\gout \vee 1}$, which is what G3 states. 
For completeness we include the next lemma, which shows that \eqref{eq:centrallimit} implies that G3 holds with 
high probability. 

We remark that although the motivation for G3 is based on results where the regularly varying random 
variables are assumed to be independent the dependent case can be included. For this, one needs 
to adjust the scaling parameters in G3 for the specified dependence. In our numerical experiments 
the in- and out- degrees in Wikipedia graphs show strong independence, hence G3  holds for networks 
such as Wikipedia.
{\sloppy

}
\begin{lem}\label{lem:probconvergencetheta}
Let $\seq{X}$ be a sequence of positive random variables such that
\[
	\frac{X_n}{a_n} \dlim X, \quad \mbox{as $n\to\infty$},
\]
for some sequence $\seq{a}$ and positive random variable $X$. Then for each $0 < \varepsilon < 1$, there exists an 
$N_\varepsilon \in \N$ and $\kappa_\varepsilon \ge \ell_\varepsilon > 0$ such that for all 
$n \ge N_\varepsilon$
\[
	\Prob(\ell_\varepsilon a_n \le X_n \le \kappa_\varepsilon a_n) \ge 1 - \varepsilon.
\]
\end{lem}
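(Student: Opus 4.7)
The plan is to deduce the claim from the Portmanteau theorem, using that the limit $X$ is strictly positive to secure a strictly positive lower constant $\ell_\varepsilon$. The key is that convergence in distribution pins down $X_n/a_n$ inside any interval of continuity points of $F_X$ to which the limit variable $X$ assigns mass close to $1$.

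First, I would fix $0 < \varepsilon < 1$ and use the hypothesis that $X$ is a positive random variable to assert that $F_X(0) = \Prob(X \le 0) = 0$ and $\lim_{t \to \infty} F_X(t) = 1$. Therefore I can choose real numbers $0 < \ell < \kappa < \infty$ with $F_X(\ell) < \varepsilon/4$ and $F_X(\kappa) > 1 - \varepsilon/4$; since $F_X$ has at most countably many discontinuity points, I can shrink $\ell$ slightly and enlarge $\kappa$ slightly so that both become continuity points of $F_X$ while still satisfying $F_X(\ell) < \varepsilon/2$ (in fact $\Prob(X < \ell) < \varepsilon/2$, since $\ell$ is a continuity point) and $F_X(\kappa) > 1 - \varepsilon/2$. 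This gives $\Prob(\ell \le X \le \kappa) > 1 - \varepsilon$.

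Next, I would invoke convergence in distribution $X_n/a_n \dlim X$, which at continuity points of $F_X$ yields
\[
	\Prob(X_n/a_n \le \kappa) \to F_X(\kappa), \qquad \Prob(X_n/a_n < \ell) \to F_X(\ell).
\]
Consequently $\Prob(\ell a_n \le X_n \le \kappa a_n) = \Prob(X_n/a_n \le \kappa) - \Prob(X_n/a_n < \ell)$ converges to $F_X(\kappa) - F_X(\ell) > 1 - \varepsilon$. Hence there exists $N_\varepsilon \in \N$ such that $\Prob(\ell a_n \le X_n \le \kappa a_n) \ge 1 - \varepsilon$ for every $n \ge N_\varepsilon$, and setting $\ell_\varepsilon := \ell$ and $\kappa_\varepsilon := \kappa$ finishes the argument.

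The only subtle point, and the step I would be most careful about, is the choice of $\ell_\varepsilon > 0$: this is where positivity of $X$ is essential, because without it we would only be guaranteed $\ell_\varepsilon \ge 0$, which is too weak for the later use in condition G3. Everything else is a routine application of the Portmanteau theorem together with the fact that $F_X$ has only countably many discontinuity points, so suitable continuity points can always be found near the chosen thresholds.
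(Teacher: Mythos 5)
Your proof is correct and follows essentially the same route as the paper's: pick an interval $[\ell,\kappa]$ with $\ell>0$ carrying $X$-mass strictly greater than $1-\varepsilon$, then transfer this to $X_n/a_n$ for large $n$ via weak convergence. You are in fact slightly more careful than the paper, which implicitly assumes $\ell$ and $\kappa$ can be taken so that $\Prob(\ell a_n \le X_n \le \kappa a_n)\to\Prob(\ell \le X \le \kappa)$ — precisely the continuity-point issue you resolve explicitly using the countability of the discontinuities of $F_X$.
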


\begin{proof}
Let $0 < \varepsilon < 1$ and take $\delta > 0$, $0 < \ell \le \kappa$ such that
\[
	\Prob(\ell \le X \le \kappa) \ge 1 - \varepsilon + \delta.
\]
Then, because  $X_n/a_n \dlim X$ as $n \to \infty$, there exists an $N \in \N$ such that for all $n \ge N$,
\[
	|\Prob(\ell \le X \le \kappa) - \Prob(\ell a_n \le X_n \le \kappa a_n)| < \delta.
\]
Now we get for all $n \ge N$,
\[
	1 - \varepsilon + \delta - \Prob(\ell a_n \le X_n \le \kappa a_n) 
	\le \Prob(\ell \le X \le \kappa) - \Prob(\ell a_n \le X_n \le \kappa a_n)
	\le \delta,
\]
hence $\Prob(\ell a_n \le X_n \le \kappa a_n) \ge 1 - \varepsilon$.
\end{proof}

\section{Rank correlations}\label{sec:rankcorrelations}

In this section we consider two other measures for degree-degree dependencies, Spearman's rho
\cite{Spearman1904} and Kendall's tau~\cite{Kendall1938}, which are based on the rankings of the degrees 
rather than their actual value. We will define these dependency measures and argue that they do not have 
unwanted behavior as we observed for Pearson's correlation coefficients. We will later use examples to 
enforce this argument and show that Spearman's rho and Kendall's tau are better candidates for measuring 
degree-degree dependencies.

\subsection{Spearman's rho}

Spearman's rho~\cite{Spearman1904} is defined as the Pearson correlation coefficient of the vector of ranks.  
Let $G = (V, E)$ be a directed graph and $\alpha, \beta \in \{+, -\}$. In order to adjust the definition of 
Spearman's rho to the setting of directed graphs we need to rank the vectors $(D^\alpha(e_\ast))_{e \in E}$
and $(D^\beta(e^\ast))_{e \in E}$. These will, however, in general have many tied values. For instance, suppose 
that $D^\alpha(v) = m$ for some $v \in V$, then edges $e \in E$ with $e_\ast = v$ satisfy $D^\alpha(e_\ast) = 
D^\alpha(v)$. Therefore, we will encounter the value $D^\alpha(v)$ at least $m$ times in the
vector $(D^\alpha(e_\ast))_{e \in E}$. We will consider two strategies for resolving ties: uniformly at random 
(Section~\ref{ssec:spearmanuniform}), and using an average ranking scheme (Section~\ref{ssec:spearmanaverage}).

\subsubsection{Resolving ties uniformly at random}\label{ssec:spearmanuniform}

Given a sequence $\{x_i\}_{1 \le i \le n}$ of distinct elements in $\R$ we denote by $R(x_j)$ the rank of $x_j$, 
i.e. $R(x_j) = |\{i | x_i \ge x_j\}|$, $1\le j\le n$. The definition of Spearman's rho in the setting of directed 
graphs is then as follows.

\begin{defn}\label{def:spearmanuniform}
Let $G = (V, E)$ be a directed graph, $\alpha, \beta \in \{+, -\}$ and let $(U_e)_{e \in E}$, $(W_e)_{e \in E}$ 
be i.i.d. copies of independent uniform random variables $U$ and $W$ on $(0, 1)$, respectively. Then
we define the $\alpha$-$\beta$ Spearman's rho of the graph $G$ as
\begin{equation}
	\spearman^\beta_\alpha(G) = \frac{12 \esum R^\alpha(e_\ast)R^\beta(e^\ast) - 3|E|(|E| + 1)^2}
	{|E|^3 - |E|},
\label{eq:spearmanuniform}
\end{equation}
where $R^\alpha(e_\ast) = R(D^\alpha(e_\ast) + U_e)$ and $R^\beta(e^\ast) = R(D^\beta(e^\ast) + W_e)$.
\end{defn}

From~\eqref{eq:spearmanuniform} we see that the negative part of $\spearman^\beta_\alpha(G)$ depends only on the 
number of edges
\[
	\frac{3(|E| + 1)^2}{(|E|^2 - 1)} = 3 + \frac{6|E| + 4}{|E|^2 - 1},
\] 
while for $\pearson^\beta_\alpha(G)$ it depended on the values of the degrees, see Definition~\ref{def:pearson}. 
When $\seq{G} \in \mathscr{G}_{\gout, \gin}$, with $\gout, \gin > 1$ then it follows that $|E_n| = \theta(n)$ hence
$3 + (6|E| + 4)/(|E|^2 - 1) \to 3$, as $n \to \infty$. Therefore we see that the negative contribution will always 
be at least $3$ and so $\spearman^\beta_\alpha(G_n)$ does not in general converge to a non-negative number while 
$\pearson^\beta_\alpha(G_n)$ does.

When calculating $\spearman^\beta_\alpha(G)$ on a graph $G$ one has to be careful, for each instance will give 
different ranks of the tied values. This could potentially give rise to very different results among several instances, 
see Section~\ref{sssec:spearman} for an example. Therefore, in experiments, we will take an average of 
$\spearman^\beta_\alpha(G)$ over several instances of the uniform ranking.

\subsubsection{Resolving ties with average ranking}\label{ssec:spearmanaverage}

A different approach for resolving ties is to assign the same average rank to all tied values. Consider, 
for example, the sequence $(1, 2, 1, 3, 3)$. Here the two values  of $3$ have ranks $1$ and $2$, but instead we 
assign the rank $3/2$ to both of them. With this scheme the sequence of ranks becomes $(9/2, 3, 9/2, 3/2, 3/2)$. 
This procedure can be formalized as follows.  
\begin{defn}\label{def:averagerank}
Let $(x_i)_{1 \le i \le n}$ be a sequence in $\R$ then we define the average rank of an element $x_i$ as
\[
	\overline{R}(x_i) = |\{j | x_j > x_i\}| + \frac{|\{j | x_j = x_i\}| + 1}{2}.
\]
\end{defn}
 
Observe that in the above definition the total average rank is preserved: $\sum_{i = 1}^n \overline{R}
(x_i) = n(n + 1)/2$. The difference with resolving ties uniformly at random is that we in general do 
not know $\sum_{i = 1}^n \overline{R}(x_i)^2$, for this depends on how many ties we have for each 
value. We now define the corresponding version of Spearman's rho of graphs as follows.

\begin{defn}\label{def:spearmanaverage}
let $G = (V, E)$ be a directed graph, $\alpha, \beta \in \{+, -\}$ and denote by $\overline{R}^\alpha
(e_\ast)$ and $\overline{R}^\beta(e^\ast)$ the average ranks of $D^\alpha(e_\ast)$ among $(D^\alpha
(e_\ast))_{e \in E}$ and $D^\beta(e^\ast)$ among $(D^\beta(e^\ast))_{e \in E}$, respectively. Then we 
define the $\alpha$-$\beta$ Spearman's rho with average resolution of ties by
\begin{equation}
	\spearmanaverage^\beta_\alpha(G) = 
	\frac{4 \esum \overline{R}^\alpha(e_\ast) \overline{R}^\beta(e^\ast) - |E|(|E| + 1)^2}
	{\overline{\sigma}_\alpha(G)\overline{\sigma}^\beta(G)},
\label{eq:spearmanaverage}
\end{equation}
where
\begin{align*}
	\overline{\sigma}_\alpha(G) &= \sqrt{4\vphantom{^\beta}\esum \overline{R}^\alpha(e_\ast)^2 
		- |E|(|E| + 1)^2}\\
	\intertext{and}
	\overline{\sigma}^\beta(G) &= \sqrt{4\esum \overline{R}^\beta(e^\ast)^2 - |E|(|E| + 1)^2}.
\end{align*}
\end{defn}

Note that $\spearmanaverage^\beta_\alpha(G)$ does not suffer from any randomness in the ranking of the 
degrees. Hence, in contrast to \eqref{eq:spearmanuniform}, here we do not need to take an average over 
multiple instances. The next lemma shows that taking the expectation over the uniform ranking is 
actually equal to applying the average ranking scheme.

\begin{lem}\label{lem:uniformaverage}
	Let $G = (V, E)$ be a graph, $e \in E$ and $\alpha, \beta \in \{+, -\}$. Then
	\begin{enumerate}[\upshape i)] 
		\item 
		$\Exp{R^\alpha(e_\ast)} = \overline{R}^\alpha(e_\ast), \quad 
		\Exp{R^\beta(e^\ast)} = \overline{R}^\beta(e^\ast), \quad \text{and}$
		\item 
		$\Exp{R^\alpha(e_\ast)R^\beta(e^\ast)} = \overline{R}^\alpha(e_\ast) \overline{R}^\beta(e^\ast)$
	\end{enumerate}
\end{lem}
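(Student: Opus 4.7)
The plan is to prove part (i) by a direct indicator decomposition of the rank, and then deduce part (ii) almost for free from the independence between the two families of uniform variables.

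For part (i), I would start by writing the rank as a sum of indicators: since the perturbed values $D^\alpha(e'_\ast) + U_{e'}$ are almost surely pairwise distinct,
\[
R^\alpha(e_\ast) = \sum_{e' \in E} \mathbf{1}\bigl\{D^\alpha(e'_\ast) + U_{e'} \ge D^\alpha(e_\ast) + U_e\bigr\}.
\]
Taking expectation and examining each term, I would split into three cases according to the comparison of the unperturbed degrees. If $D^\alpha(e'_\ast) > D^\alpha(e_\ast)$, the gap is at least one while the uniforms lie in $(0,1)$, so the probability is $1$; if $D^\alpha(e'_\ast) < D^\alpha(e_\ast)$, the probability is $0$; if $D^\alpha(e'_\ast) = D^\alpha(e_\ast)$ and $e' \ne e$, then $U_{e'}$ and $U_e$ are i.i.d.\ continuous and the probability is $1/2$ by symmetry; finally $e' = e$ contributes $1$. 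Setting $A = |\{e' \in E : D^\alpha(e'_\ast) > D^\alpha(e_\ast)\}|$ and $B = |\{e' \in E : D^\alpha(e'_\ast) = D^\alpha(e_\ast)\}|$ (so $B \ge 1$ because $e' = e$ is always counted), this sums to $A + \tfrac{1}{2}(B - 1) + 1 = A + \tfrac{B+1}{2}$, which is precisely $\overline{R}^\alpha(e_\ast)$ by Definition~\ref{def:averagerank}. The identity $\Exp{R^\beta(e^\ast)} = \overline{R}^\beta(e^\ast)$ follows by an identical argument with $W$'s replacing $U$'s and targets replacing sources.

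For part (ii), I would observe that $R^\alpha(e_\ast)$ is a (measurable) function of the family $(U_{e'})_{e' \in E}$ alone, while $R^\beta(e^\ast)$ is a function of $(W_{e'})_{e' \in E}$ alone. Because the two families are independent by assumption, the random variables $R^\alpha(e_\ast)$ and $R^\beta(e^\ast)$ are themselves independent, hence their expectations factor, and applying part (i) to each factor yields
\[
\Exp{R^\alpha(e_\ast) R^\beta(e^\ast)} = \Exp{R^\alpha(e_\ast)}\,\Exp{R^\beta(e^\ast)} = \overline{R}^\alpha(e_\ast)\,\overline{R}^\beta(e^\ast).
\]

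There is no serious obstacle here; the only subtlety to be careful about is the bookkeeping in the case split for part (i), specifically the self-term $e' = e$ (which always contributes $1$ with probability one rather than $1/2$) and the convention $|\{e' \in E : D^\alpha(e'_\ast) = D^\alpha(e_\ast)\}|$ including $e$ itself, which is what makes the $\tfrac{1}{2}(B-1) + 1$ collapse correctly to $\tfrac{B+1}{2}$ matching Definition~\ref{def:averagerank}.
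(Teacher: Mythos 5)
Your proof is correct and follows essentially the same route as the paper: part (i) uses the identical indicator decomposition of the rank and the same case analysis ($>$ gives $1$, ties with $e'\ne e$ give $1/2$, the self-term gives $1$), collapsing to $\overline{R}^\alpha(e_\ast)$ exactly as in Definition~\ref{def:averagerank}. For part (ii) you replace the paper's explicit four-term expansion of $R^\alpha(e_\ast)R^\beta(e^\ast)$ with the direct observation that the two ranks are measurable functions of the independent families $(U_{e'})_{e'\in E}$ and $(W_{e'})_{e'\in E}$ and hence independent; this is a slightly cleaner statement of the same underlying fact and is equally valid.
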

	
\begin{proof}
	\hfill
	\begin{enumerate}[\upshape i)]
		\item 
		We will only prove the first statement. The proof for the second one is similar. Since $R^\alpha
		(e_\ast) = R(D^\alpha (e_\ast)) + U_e$ and $(U_e)_{e \in E}$ are i.i.d. copies of 
		an uniform random	variable $U$ on $(0, 1)$ we have that
		\begin{align*}
			&\sum_{f \in E} I\left\{D^\alpha(f_\ast) = D^\alpha(e_\ast)\right\} \Exp{I\left\{U_f \ge U_e\right\}} \\
			&= \sum_{f \in E} I\left\{D^\alpha(f_\ast) = D^\alpha(e_\ast)\right\} \left(I\left\{f = e\right\}
				+ \frac{1}{2}I\left\{f \ne e\right\}\right) \\
			&= \frac{1}{2}\sum_{f \in E} I\left\{D^\alpha(f_\ast) = D^\alpha(e_\ast)\right\} + \frac{1}{2}.
		\end{align*}
		It follows that
		\begin{align*}
			\Exp{R^\alpha(e_\ast)} &= \Exp{\sum_{f \in E} I\left\{D^\alpha(f_\ast) + U_f \ge D^\alpha(e_\ast) + U_e\right\}} \\
			&= \sum_{f \in E} I\left\{D^\alpha(f_\ast) > D^\alpha(e_\ast)\right\} 
				+ \sum_{f \in E} I\left\{D^\alpha(f_\ast) = D^\alpha(e_\ast)\right\} \Exp{I\left\{U_f \ge U_e\right\}} \\
			&= \sum_{f \in E} I\left\{D^\alpha(f_\ast) > D^\alpha(e_\ast)\right\} 
				+ \frac{1}{2}\sum_{f \in E} I\left\{D^\alpha(f_\ast) = D^\alpha(e_\ast)\right\} + \frac{1}{2} \\
			&= \overline{R}^\alpha(e_\ast).
		\end{align*}
		\item
		By definition we have that
		\begin{align*}
			R^\alpha&(e_\ast)R^\beta(e^\ast)= \sum_{f, g \in E}I\left\{\vphantom{D^\beta}D^\alpha(f_\ast) 
				> D^\alpha(e_\ast)\right\}I\left\{D^\beta(g^\ast) 
				> D^\beta(e^\ast)\right\} \\
			&\hspace{10pt}+ \sum_{f, g \in E}I\left\{\vphantom{D^\beta} D^\alpha(f_\ast) > D^\alpha(e_\ast)\right\}
				I\left\{D^\beta(g^\ast) = D^\beta(e^\ast)\right\}I\left\{\vphantom{D^\beta} W_g \ge W_e\right\} \\
			&\hspace{10pt}+ \sum_{f, g \in E}I\left\{\vphantom{D^\beta} D^\alpha(f_\ast) = D^\alpha(e_\ast)\right\}
				I\left\{\vphantom{D^\beta} U_f \ge U_e\right\}I\left\{D^\beta(g^\ast) > D^\beta(e^\ast)\right\} \\
			&\hspace{10pt}+ \sum_{f, g \in E} I\left\{\vphantom{D^\beta} D^\alpha(f_\ast) = D^\alpha(e_\ast)\right\}
				I\left\{D^\beta(g^\ast) = D^\beta(e^\ast)\right\}I\left\{U_f \ge U_e\right\}
				I\left\{\vphantom{D^\beta} W_g \ge W_e\right\}.
		\end{align*}
		Therefore, since $(U_f)_{f \in E}$ and $(W_g)_{g \in E}$ are i.i.d. copies of independent uniform 
		random variables $U$ and $W$ on $(0, 1)$, respectively, the result follows by applying i). 
	\end{enumerate}
\end{proof}
	
From Lemma~\ref{lem:uniformaverage} we conclude that instead of calculating $\spearman_\alpha^\beta$ several times
and then taking the average we can immediately apply the average ranking which limits the total calculations to 
just one. Moreover, we have that
\begin{equation} \label{eq:spearmanuniformaverage}
	\Exp{\spearman_\alpha^\beta(G)} = \frac{3\overline{\sigma}_\alpha\overline{\sigma}^\beta}{|E|^3 - |E|}
	\spearmanaverage_\alpha^\beta(G),
\end{equation}
which emphasizes that the difference between the uniform at random and average ranking scheme is determined by the
number of ties in the degrees.

\subsection{Kendall's Tau}
Another common rank correlation is Kendall's tau~\cite{Kendall1938}, which measures the weighted 
difference between the number of concordant and discordant pairs of the joint observations 
$(x_i, y_i)_{1 \le i \le n}$. More precisely, a pair $(x_i, y_i)$ and $(x_j, y_j)$ of joint observations
is concordant if $x_i < x_j$ and $y_i < y_j$ or if $x_i > x_j$ and $y_i > y_j$. They are called discordant
if $x_i < x_j$ and $y_i > y_j$ or if $x_i > x_j$ and $y_i < y_j$. 
\begin{defn}\label{def:kendalltau}
Let $G = (V, E)$ be a directed graph, $\alpha, \beta \in \{-, +\}$ and denote by $\mathscr{N}_c$ and 
$\mathscr{N}_d$, respectively, the number of concordant and discordant pairs among 
$\left(D^\alpha(e_\ast), D^\beta(e^\ast)\right)_{e \in E}$. Then we define the $\alpha$-$\beta$ 
Kendall's tau of $G$ by\sloppy{

}
\[
	\kendall^\beta_\alpha(G) = \frac{2(\mathscr{N}_c - \mathscr{N}_d)}{|E|(|E| - 1)}.
\]
\end{defn}

It might seem at first that $\kendall$ does not suffer from ties. However, note that the numerator of 
$\tau$ includes only strictly concordant and discordant pairs, while the denominator is equal to 
the number of all possible pairs, irregardless of the presence of ties. Hence, when the number of ties is 
large, the denominator may become much larger than the numerator resulting in small, even vanishing in the 
graph size limit, values of $\kendall^\beta_\alpha$. We will provide such example in 
Section~\ref{sec:examples}. Since, as discussed above, the sequences $\left(D^\alpha(e_\ast)\right)_{e \in E}$ 
and $\left(D^\beta(e^\ast)\right)_{e \in E}$ naturally have a large number of ties, we cannot expect 
$\kendall^\beta_\alpha(G)$ to take very large (positive or negative) values. To address this issue an weighted 
extension of Kendall's tau was very recently introduced~\cite{Vigna2014}. This new measure also puts more emphasis
on nodes with large in- or out-degrees.

\section{Bridge graph example}\label{sec:examples}

In this section we will provide a sequence of graphs to illustrate the difference between the four
correlation measures in directed networks. We start with a deterministic sequence and will later 
adapt this to a randomized sequence using regularly varying random variables.

\subsection{A deterministic in-out bridge graph}\label{ssec:bridgegraph}

Let $k, m \in \N_{> 0}$, then we define the bridge graph $G(k, m) = (V(k, m), E(k, m))$, displayed in 
Figure~\ref{fig:bridgegraph1}, as follows:  
\[
	V(k, m) = v \cup w \cup \bigcup_{i = 1}^k v_i \cup \bigcup_{j = 1}^{m} w_j, 
	\quad E(k, m) = g \cup \bigcup_{i = 1}^k e_i \cup \bigcup_{j = 1}^{m} f_j, \text{ where}
\]
\[
	e_i = (v_i, v), \, f_j = (w, w_j) \text{ and } g = (v, w).
\]
It follows that $|E(k, m)| = m + k + 1$. For the degrees of $G(k, m)$ we have:
\begin{align*}
	&D^+(v_i) = 1, &&D^-(v_i) = 0, &&&\text{for all } 1 \le i \le k;\\
	&D^+(w_j) = 0, &&D^-(w_j) = 1, &&&\text{for all } 1 \le j \le m;\\
	&D^+(v) = 1, &&D^-(v) = k, \\
	&D^+(w) = m, &&D^-(w) = 1.
\end{align*}

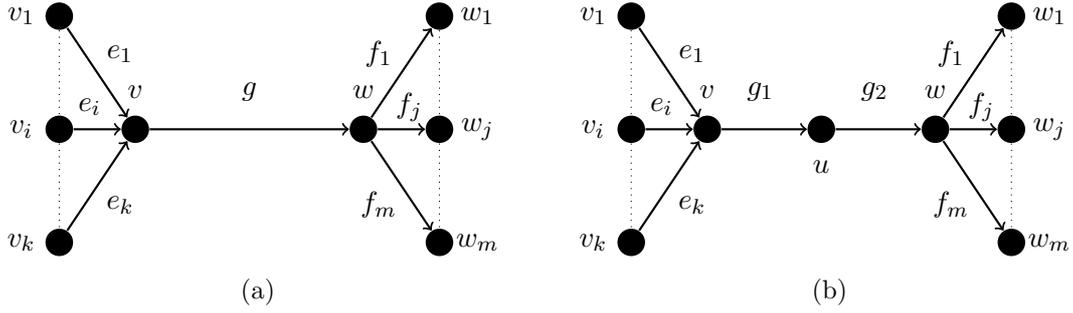
\begin{figure}[t]%
\centering
\begin{subfigure}{0.5\textwidth}
	\centering
	\tikzstyle{vertex}=[fill, circle, minimum size=4pt]
\tikzstyle{edge}=[style=thick, color=black]

\begin{tikzpicture}
	\draw node[vertex] (left) at (1,0) {};
	\draw node[vertex] (right) at (4,0) {};
	\draw node[vertex] (upleft) at (0,1.5) {};
	\draw node[vertex] (ileft) at (0,0) {};
	\draw node[vertex] (downleft) at (0,-1.5) {};
	\draw node[vertex] (iright) at (5,0) {};
	\draw node[vertex] (upright) at (5,1.5) {};
	\draw node[vertex] (downright) at (5,-1.5) {};
	\draw node at (-0.5,1.5) {$v_1$};
	\draw node at (-0.5,0) {$v_i$};
	\draw node at (-0.5,-1.5) {$v_k$};
	\draw node at (0.8,1) {$e_1$};
	\draw node at (0.4,0.3) {$e_i$};
	\draw node at (0.8,-1) {$e_k$};
	\draw node at (1,0.5) {$v$};
	\draw node at (2.5,0.5) {$g$};
	\draw node at (4,0.5) {$w$};
	\draw node at (5.5,1.5) {$w_1$};
	\draw node at (5.5,0) {$w_j$};
	\draw node at (5.5,-1.5) {$w_m$};
	\draw node at (4.2,1) {$f_1$};
	\draw node at (4.6,0.3) {$f_j$};
	\draw node at (4.2,-1) {$f_m$};
	\draw [->][edge] (left) -- (right);
	\draw [->][edge] (upleft) -- (left);
	\draw [->][edge] (ileft) -- (left);
	\draw [->][edge] (downleft) -- (left);
	\draw [->][edge] (right) -- (upright);
	\draw [->][edge] (right) -- (iright);
	\draw [->][edge] (right) -- (downright);
	\draw [dotted] (upleft) -- (ileft);
	\draw [dotted] (ileft) -- (downleft);
	\draw [dotted] (upright) -- (iright);
	\draw [dotted] (iright) -- (downright);
\end{tikzpicture}
	\subcaption{}
	\label{fig:bridgegraph1}
\end{subfigure}~
\begin{subfigure}{0.5\textwidth}
	\centering
	\tikzstyle{vertex}=[fill, circle, minimum size=2pt]
\tikzstyle{edge}=[style=thick, color=black]

\begin{tikzpicture}
	\draw node[vertex] (left) at (1,0) {};
	\draw node[vertex] (mid) at (2.5,0) {};
	\draw node[vertex] (right) at (4,0) {};
	\draw node[vertex] (upleft) at (0,1.5) {};
	\draw node[vertex] (ileft) at (0,0) {};
	\draw node[vertex] (downleft) at (0,-1.5) {};
	\draw node[vertex] (iright) at (5,0) {};
	\draw node[vertex] (upright) at (5,1.5) {};
	\draw node[vertex] (downright) at (5,-1.5) {};
	\draw node at (-0.5,1.5) {$v_1$};
	\draw node at (-0.5,0) {$v_i$};
	\draw node at (-0.5,-1.5) {$v_k$};
	\draw node at (0.8,1) {$e_1$};
	\draw node at (0.4,0.3) {$e_i$};
	\draw node at (0.8,-1) {$e_k$};
	\draw node at (1,0.5) {$v$};
	\draw node at (1.7,0.5) {$g_1$};
	\draw node at (2.5,-0.5) {$u$};
	\draw node at (3.2,0.5) {$g_2$};
	\draw node at (4,0.5) {$w$};
	\draw node at (5.5,1.5) {$w_1$};
	\draw node at (5.5,0) {$w_j$};
	\draw node at (5.5,-1.5) {$w_m$};
	\draw node at (4.2,1) {$f_1$};
	\draw node at (4.6,0.3) {$f_j$};
	\draw node at (4.2,-1) {$f_m$};
	\draw [->][edge] (left) -- (mid);
	\draw [->][edge] (mid) -- (right);
	\draw [->][edge] (upleft) -- (left);
	\draw [->][edge] (ileft) -- (left);
	\draw [->][edge] (downleft) -- (left);
	\draw [->][edge] (right) -- (upright);
	\draw [->][edge] (right) -- (iright);
	\draw [->][edge] (right) -- (downright);
	\draw [dotted] (upleft) -- (ileft);
	\draw [dotted] (ileft) -- (downleft);
	\draw [dotted] (upright) -- (iright);
	\draw [dotted] (iright) -- (downright);
\end{tikzpicture}
	\subcaption{}
	\label{fig:bridgegraph2}
\end{subfigure}
\caption{A graphical representation of the graphs $G(k, m)$ (a) and $\hat{G}(k, m)$ (b).}%
\label{fig:bridgegraphs}%
\end{figure}

Looking at the scatter plot of $(D^-(e_\ast), D^+(e^\ast))_{e \in E(k, m)}$, Figure~\ref{fig:scatterplot1}, 
we see that the point $(k, m)$ contributes towards a positive dependency while the points $(0, 1)$ and 
$(1, 0)$ contribute towards a negative dependency. Hence, depending on how much weight we put on each of 
these points we could argue equally well that this graph could have a positive or negative value for the 
In/Out dependency. We can however extend the in-out bridge graph to a graph for which we do have a clearly 
negative In/Out dependency.

We define the disconnected in-out bridge graph $\hat{G}(k,m) = (\hat{V}(k, m), \hat{E}(k, m))$ from 
$G(k, m)$ by adding a vertex $u$ and replacing the edge $g = (v, w)$ by the edges $g_1 = (v, u)$ and 
$g_2 = (u, w)$, see Figure~\ref{fig:bridgegraph2}. In this graph the node with the largest in-degree, $v$, 
is connected to node $u$, of out-degree 1. Similarly $u$, which has in-degree 1, is connected to the node 
with the highest out-degree, $w$. Therefore we would expect a negative value of In/Out dependency measures. 
This intuition is supported by the scatter plot of $(D^+(e^\ast), D^-(e_\ast))_{e \in \hat{E}(k, m)}$, 
Figure~\ref{fig:scatterplot2}.
{\sloppy

}
Now consider for a fixed $a \in \N$ the sequence of graphs $G_n^a := G(n, an)$ and $\hat{G}_n^a := \hat{G}
(n, an)$. Then, following the above reasoning we would expect any In/Out dependency measure of 
$\hat{G}_n^a$ to converge to -1. 

In Sections~\ref{sssec:pearson} -- \ref{sssc:kendall} we will show that $\nlim \pearson^+_-(\hat{G}_n^a) = 0$ 
while the other three measures indeed yield negative values. Furthermore, we show that $\nlim \pearson^+_-
(G_n^a) = 1$ while $\nlim \spearmanaverage^+_-(G^a_n) = -1$ reflecting the two possibilities for the In/Out 
correlation represented in the scatter plot, Figure~\ref{fig:scatterplot1}. 

\begin{figure}[t]%
\centering
\begin{subfigure}{0.5\textwidth}
	\begin{tikzpicture}[scale=0.7]
	\draw (0,0) -- (0,6); 
	\draw node at (-1,6) {$D^+(e^\ast)$};
	\draw (0,0) -- (6,0);
	\draw node at (6,-1) {$D^-(e_\ast)$};
	\draw (5,0.1) -- (5,-0.1) node[below]{$k$};
	\draw (0.1,5) -- (-0.1,5) node[left]{$m$};
	\draw (1,0) node[below]{1};
	\draw (0,1) node[left]{1};
	\draw node at (1,0) {$\bullet$};
	\draw (1,0) node[above]{$f_j$};
	\draw node at (0,1) {$\bullet$};
	\draw (0,1)node[right]{$e_i$};
	\draw node at (5,5) {$\bullet$};
	\draw (5,5)node[right]{$g$};
\end{tikzpicture}
	\caption{}
	\label{fig:scatterplot1}
\end{subfigure}~
\begin{subfigure}{0.5\textwidth}
	\begin{tikzpicture}[scale=0.7]
	\draw (0,0) -- (0,6); 
	\draw node at (-1,6) {$D^+(e^\ast)$};
	\draw (0,0) -- (6,0);
	\draw node at (6,-1) {$D^-(e_\ast)$};
	\draw (5,0.1) -- (5,-0.1) node[below]{$k$};
	\draw (0.1,5) -- (-0.1,5) node[left]{$m$};
	\draw (1,0) node[below]{1};
	\draw (0,1) node[left]{1};
	\draw node at (1,0) {$\bullet$};
	\draw (1,0) node[above]{$f_j$};
	\draw node at (0,1) {$\bullet$};
	\draw (0,1)node[right]{$e_i$};
	\draw node at (5,1) {$\bullet$};
	\draw (5,1)node[right]{$g_2$};
	\draw node at (1,5) {$\bullet$};
	\draw (1,5)node[right]{$g_1$};
\end{tikzpicture}
	\caption{}
	\label{fig:scatterplot2}
\end{subfigure}
\caption{The scatter plots for the degrees of (a) $G(k, m)$ and (b) $\hat{G}(k, m)$.}%
\label{fig:correlationplot}%
\end{figure}
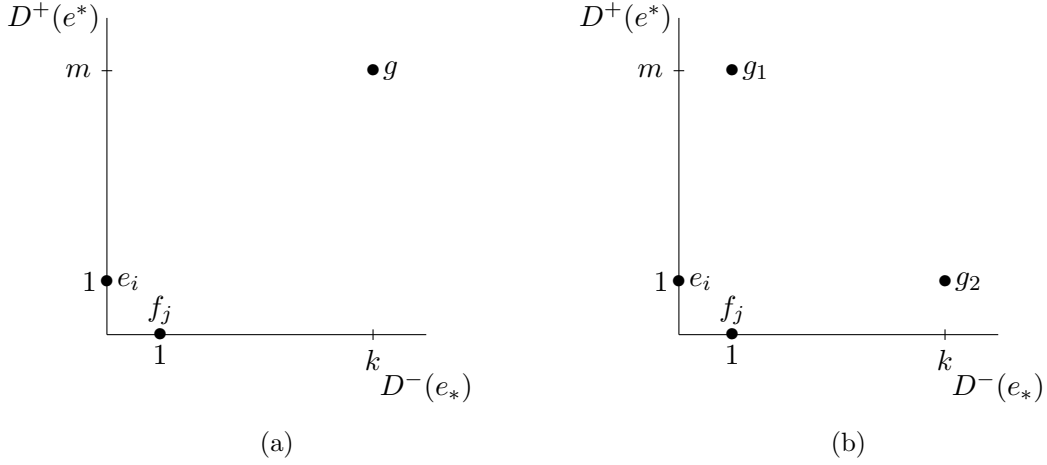

\subsubsection{Pearson In/Out correlation}\label{sssec:pearson}

We start with the graph $G_n^a$. Basic calculations yield that
\begin{align}
	&\sum_{e \in E^a_n} D^-(e_\ast)D^+(e^\ast) = an^2, \label{eq:pearsonbridgeedges} \\
	&\sum_{v \in V^a_n} D^-(v)D^+(v) = (1 + a)n, \label{eq:pearsonbridgevertices} \\
	&\sum_{v \in V^a_n} D^-(v)^2D^+(v) = n^2 + an, \\
	&\sum_{v \in V^a_n} D^-(v)D^+(v)^2 = n + a^2n^2 \label{eq:pearsonbridgelast},
\end{align}
hence, using~\eqref{eq:alphavariance} and~\eqref{eq:betavariance}, we obtain:
\begin{align*}
	|E^a_n|\sigma_-(G^a_n) &= \sqrt{((1 + a)n + 1)(n^2 + an) - (1 + a)^2n^2} \notag \\
	&= \sqrt{(1 + a)n^3 - (n - 1)an}
\end{align*}
and
\begin{align*}
	|E^a_n|\sigma^+(G^a_n) &= \sqrt{((1 + a)n + 1)(n + a^2n^2) - (1 + a)^2n^2} \notag \\
	&= \sqrt{(1 + a)n^3 - (an - 1)n}.
\end{align*}
When we plug this into~\eqref{eq:pearsonedges} with $\alpha = -$ and $\beta = +$ we get
\begin{align}
	\pearson^+_-(G^a_n) &= \frac{|E^a_n|an^2 - (1 + a)^2n^2}
	{|E^a_n|\sigma_\alpha(G_n^a)|E^a_n|\sigma^\beta(G^a_n)} \notag\\ 
	&=\frac{a(1 + a)n^3 - (a^2 + a + 1)n^2}
	{a\sqrt{(1 + a)n^3 - (n - 1)an}\sqrt{(1 + a)n^3 -(an - 1)n}}. \label{eq:pearsonbridgegraph}
\end{align}
From \eqref{eq:pearsonbridgegraph} it follows that if $a \in \N$ is fixed, then $\nlim \pearson^+_-(G^a_n) = 1$, thus 
$\pearson^+_-(G^a_n)$ in fact reflects the connection between $v$ and $w$ where the point $(n, an)$ 
in the scatter plot received the most mass. However, when we turn to $\hat{G}_n^a$ we get a less 
expected result. Splitting the edge $g$ in two adds one to equations
\eqref{eq:pearsonbridgevertices}-\eqref{eq:pearsonbridgelast}, while equation~\eqref{eq:pearsonbridgeedges} 
becomes $(a + 1)n$ which is linear in $n$ instead of quadratic. Because all other terms keep their scale 
with respect to $n$ we easily deduce that for a fixed $a \in \N$, $\nlim \pearson_-^+(\hat{G}_n^a) = 0$. 
This is undesirable for we would expect any In/Out correlation on $\hat{G}^a_n$ to converge to $-1$.

\subsubsection{Spearman In/Out correlation}\label{sssec:spearman}

We start by calculation $\spearmanaverage^+_-(G^a_n)$. For this observe that by~\eqref{eq:spearmanaverage}
and the definition of $G^a_n$ we have that,
\begin{align*}
	&\overline{R}^+((e_{i})^\ast) = 1 + \frac{n + 1}{2}, 
	&&\overline{R}^-((e_i)_\ast) = an + 1 + \frac{n + 1}{2}; \\
	&\overline{R}^+((f_{j})^\ast) = n + 1 + \frac{an + 1}{2}, 
	&&\overline{R}^-((f_j)_\ast) = 1 + \frac{an + 1}{2}; \\
	&\overline{R}^+(g^\ast) = 1,	&&\overline{R}^-(g_\ast) = 1.
\end{align*} 
After some basic calculations we get
\[
	\spearmanaverage^+_-(G^a_n) = \frac{-(a^2 + a)n^3 + (a + 1)^2n^2 + (a + 1)n}
	{(a^2 + a)n^3 + (a + 1)^2n^2 + (a + 1)n} \to -1 \quad \text{as } n \to \infty.
\]
This result is in striking contrast with $\pearson_-^+(G_n^a)$. Indeed, $\spearmanaverage^+_-$
places all the weight on the points $(0, 1)$ and $(1, 0)$. However, based on the scatter plot, see 
Figure~\ref{fig:scatterplot1}, both results could be plausible. 

Let us now compute 
$\spearmanaverage_-^+(\hat{G_n^a})$. For the rankings we have
\begin{align*}
	&\overline{R}^+((e_i)^\ast) = 2 + \frac{n}{2},
	&&\overline{R}^-((e_i)_\ast) = an + 2 + \frac{n + 1}{2}; \\
	&\overline{R}^+((f_j)^\ast) = n + 2 + \frac{an + 1}{2},
	&&\overline{R}^-((f_j)_\ast) = 2 + \frac{an}{2}; \\
	&\overline{R}^+((g_1)^\ast) = 2 + \frac{n}{2}, &&\overline{R}^-((g_1)_\ast) = 1; \\
	&\overline{R}^+((g_2)^\ast) = 1, &&\overline{R}^-((g_2)_\ast) = 2 + \frac{an}{2}.
\end{align*}
Filling this into equation~\eqref{eq:spearmanaverage} we get
\begin{align*}
	\spearmanaverage^+_-(\hat{G}^a_n) &= \frac{-(a^2 + a)n^3 - (a^2 + a)n^2 + (a + 1)n - 2}
	{\bar{\sigma}_-(\hat{G}^a_n)\bar{\sigma}^+(\hat{G}^a_n)},
\end{align*}
where
\begin{align*}
	\bar{\sigma}_-(\hat{G}^a_n) &= \sqrt{(a^2 + a)n^3 + (a^2 + 4a + 2)n^2 + (3a + 4)n - 2} \text{ and} \\
	\bar{\sigma}^+(\hat{G}^a_n) &= \sqrt{(a^2 + a)n^3 + (2a^2 + 4a + 1)n^2 + (4a + 3)n + 2}.
\end{align*}
Because
\[
	\nlim \frac{1}{n^3} \bar{\sigma}_-(\hat{G}^a_n)\bar{\sigma}^+(\hat{G}^a_n) = (a^2 + a)
\]
it follows that 
\[
	\nlim \spearmanaverage^+_-(\hat{G}^a_n) = \nlim \frac{1/n^3}{1/n^3} \left(\frac{-(a^2 + a)n^3 - (a^2 + a)n^2 
	+ (a + 1)n - 2}	{\bar{\sigma}_-(\hat{G}^a_n)\bar{\sigma}^+(\hat{G}^a_n)}\right) = -1,
\]
which equals $\nlim \spearmanaverage_-^+(G_n^a)$. We have already argued that based on the graph and the scatter 
plot we would expect negative In/Out correlation for the sequence $\seq{\hat{G}^a}$. This result is 
in agreement with what we would expect, while $\pearson_-^+(\hat{G}_n^a)$ converges to $0$ as $n \to 
\infty$.

Now we turn to $\spearman^+_-(G^a_n)$. We will show that the choice of ranking of the tied values can 
have a great effect on the outcome of the Spearman's In/Out correlation. In this example we will pick 
two rankings, one will yield $\spearman_-^+(G_n^a)>0$ while the other will give $\spearman_-^+(G_n^a)
< 0$. 

It is clear from the definition of $G^a_n$ that the in- and out-degrees of all $e_i$ are the same, and
this is also true for $f_j$. Let us now impose the following ranking of the vectors $(D^+(e^\ast))_{e 
\in E^a_n}$ and $(D^-(e_\ast))_{e \in E^a_n}$:
\begin{align*}
	&R^+((e_i)^\ast) = an + i, &&R^-((e_i)_\ast) = i, &&&\text{for all } 1 \le i \le n; \\
	&R^+((f_j)^\ast) = j, &&R^-((f_j)_\ast) = n + j, &&&\text{for all } 1 \le j \le an; \\
	&R^+(g^\ast) = 1 + (a + 1)n, &&R^-(g_\ast) = 1 + (a + 1)n.
\end{align*}
Here we ordered the ties by the order of their indices. We calculate that 
\begin{equation}
	\spearman^+_-(G^a_n) = \frac{(a^3 - 3a^2 - 3a + 1)n^3 + 3(a + 1)^2n^2 + 2(a + 1)n}
	{(a^3 + 3a^2 + 3a + 1)n^3 + 3(a + 1)^2n^2 + 2(a + 1)n}.
\label{eq:spearmanbridge1}
\end{equation}
Now let us now order $(D^+(e^\ast))_{e \in E^a_n}$ and $(D^-(e_\ast))_{e \in E^a_n}$ as follows:
\begin{align*}
	&R^+((e_i)^\ast) = (a + 1)n + 1 - i, &&R^-((e_i)_\ast) = i, &&&\text{for all } 1 \le i \le n; \\
	&R^+((f_j)^\ast) = an + 1 - j, &&R^-((f_j)_\ast) = n + j, &&&\text{for all } 1 \le j \le an; \\
	&R^+(g^\ast) = 1 + (a + 1)n, &&R^-(g_\ast) = 1 + (a + 1)n.
\end{align*} 
This order differs from the first one only on the vector $(D^+(e^\ast))_{e \in E^a_n}$, where we now 
ordered the ties based on the reversed order of their indices. Here we get, after some calculations,
\begin{equation}
	\spearman^+_-(G^a_n) = \frac{-(a + 1)^3n^3 + 3(a + 1)^2n^2 + 2(a + 1)n}
	{(a + 1)^3n^3 + 3(a + 1)^2n^2 + 2(a + 1)n}
\label{eq:spearmanbridge2}
\end{equation}
When we compare~\eqref{eq:spearmanbridge2} with~\eqref{eq:spearmanbridge1} we see that for the former 
$\nlim \spearman^+_-(G^a_n) = -1$ for all $a \in \N$ while for the latter we have $\nlim \spearman^+_-
(G^a_n) = (a^3 - 3a^2 - 3a + 1)/(a + 1)^3$. This means that increasing $a$ will actually increase the 
limit of~\eqref{eq:spearmanbridge1}, which becomes positive when $a \ge 4$. If we denote by $d_n^a$ 
the absolute value of the difference between~\eqref{eq:spearmanbridge1} and~\eqref{eq:spearmanbridge2}, 
we get that $\lim_{n \to \infty} d_n^a = 2(a^3 + 1)/(a + 1)^3$ which converges to 2 as $a \to \infty$. 
This agrees with the fact that for~\eqref{eq:spearmanbridge1} it holds that $\lim_{a \to \infty} 
\lim_{n \to \infty} \spearman_-^+(G^a_n) = 1$ while $\lim_{a \to \infty} \lim_{n \to \infty} 
\spearman_-^+(G^a_n) = -1$ for~\eqref{eq:spearmanbridge2}. We see that changing the order of the ties 
can have a large impact on the value of $\spearman^\beta_\alpha(G)$, as was already mentioned in 
Section~\ref{ssec:spearmanuniform}. 
Now, using equation~\eqref{eq:spearmanuniformaverage}, $\lim_{n \to \infty} \spearmanaverage_-^+(G^a_n) 
= -1$ and the fact that
\[
	\overline{\sigma}_\alpha(G^a_n)\overline{\sigma}^\beta(G^a_n) = (a^2 + a)n^3 + (a + 1)^2n^2 + (a + 1)n,
\]
we get that $\lim_{n \to \infty} \Exp{\spearman_-^+(G^a_n)} = -2a/(a + 1)^2$. Notice that, unlike 
$\spearmanaverage_-^+(G^a_n)$, this result still depends on $a$ and converges to 0 as $a \to \infty$. 
This is not unexpected because the majority of edges produce ties, hence, most of the ranks are defined by 
independent realizations of $U$ and $W$. These results indicate that Spearman's rho with average 
resolution of ties is the most informative correlation for this graph. 

\subsubsection{Kendall's Tau In/Out correlation}{\label{sssc:kendall}}

In order to compute Kendall's Tau, we need to determine the number of concordant and discordant pairs. 
Starting with $G_n^a$, we observe that we have three kinds of joint observations, namely 
\begin{align*}
	I &: \left(D^-(e_{i \ast}), D^+(e_i^\ast)\right), \\
	II &: \left(D^-(f_{j \ast}), D^+(f_j^\ast)\right) \text{ and} \\
	III &: \left(D^-(g_\ast), D^+(g^\ast)\right).
\end{align*}
The combinations I and III, and II and III are concordant while I and II are discordant. It follows 
that $\mathscr{N}_c = (a + 1)n$ while $\mathscr{N}_d = an^2$. Hence we get, see Definition~\ref{def:kendalltau}, 
\[
	\kendall^+_-(G^a_n) = \frac{2(a + 1)n - 2an^2}{(a + 1)^2n^2 + (a + 1)n},
\]	
which gives $\nlim \kendall^+_-(G^a_n) = -\frac{2a}{(a + 1)^2}$. We observe that this equals $\nlim \Exp{
\spearman_-^+(G^a_n)}$, calculated in the previous section.

For the graph $\hat{G}^a_n$ we have four kinds of joint observations:
\begin{align*}
	I &: \left(\din(e_{i \ast}), \dout(e_i^\ast)\right), \\
	II &: \left(\din(f_{j \ast}), \dout(f_j^\ast)\right), \\
	III &: \left(\din(g_{1 \ast}), \dout(g_{1}^\ast)\right) \text{ and} \\
	IV &: \left(\din(g_{2 \ast}), \dout(g_{2}^\ast)\right). 
\end{align*}
Again the combinations I and II are discordant, while now I and III, and II and IV are concordant. 
Therefore we get $\mathscr{N}_c = (a + 1)n$ and $\mathscr{N}_d = an^2$, hence $\nlim \kendall^+_{-}
(\hat{G}^a_n) = -\frac{2a}{(a + 1)^2}$ which equals the limit for $\kendall^+_-(G^a_n)$. This is because 
the tied values, which are the majority in this example, make the influence of the extra node on the 
Kendall's tau negligible.

Note that $\nlim \kendall^+_-(G^a_n)$ decreases when we increase $a$. This is because the number
of tied values among the degrees increases with $a$. We already mentioned that $\kendall^\beta_\alpha$
gives smaller values when more ties are involved. Here this behavior is clearly present. 

\subsection{A collection of random In/Out bridge graphs}\label{ssec:randombridgegraph}

Let us now consider a collection of In/Out bridge graphs $G(W, Z)$ as defined in 
Section~\ref{ssec:bridgegraph}, where the values of $W$ and $Z$ are integer regularly 
varying random variables. 

Let $X, Y \in \mathcal{R}_{-\gamma}$ be independent and integer valued and fix $a \in \R_{> 0}$. For 
each $n \in \N$ take $(X_i)_{1 \le i \le n}$ and $(Y_i)_{1 \le i \le n}$ to be i.i.d. copies of $X$ 
and $Y$, respectively, and define $W_i = X_i + Y_i$ and $Z_i = \lfloor X_i + aY_i\rfloor$. Then we 
define the graph $\mathcal{G}^a_n$ as a disconnected collection of the graphs $(G(W_i, Z_i))_{1 \le 
i \le n}$. We will calculate $\pearson^+_-(\mathcal{G}^a_n)$ and prove that it converges to a random 
variable, which can have support on $(\varepsilon, 1)$ for any $\varepsilon\in (0,1]$ depending on a 
specific choice of $a$. 

Using the calculations in Section~\ref{sssec:pearson} we obtain:
\begin{align*}
	&\sum_{e \in E^a_n} D^-(e_\ast)D^+(e^\ast) = \sum_{i = 1}^n \left(X_i^2 + aY_i^2 + (1 + a)X_iY_i\right), \\
	&\sum_{v \in V^a_n} D^-(v)D^+(v) = \sum_{i = 1}^n \left(2X_i + (1 + a)Y_i\right), \\
	&\sum_{v \in V^a_n} D^-(v)^2D^+(v) = \sum_{i = 1}^n \left(X_i^2 + Y_i^2 + 2X_iY_i + X_i + aY_i\right), \\
	&\sum_{v \in V^a_n} D^-(v)D^+(v)^2 = \sum_{i = 1}^n \left(X_i^2 + a^2Y_i^2 + 2aX_iY_i + X_i + Y_i\right) \text{ and} \\
	&|E^a_n| = \sum_{i = 1}^n \left(2X_i + (1 + a)Y_i + 1\right).
\end{align*}
By the stable limit law we have a sequence $\seq{a}$ such that 
\[
	\frac{1}{a_n} \sum_{i = 1}^n X_i^2 \dlim S_X \quad \text{and} \quad \frac{1}{a_n} \sum_{i = 1}^n Y_i^2 \dlim S_Y 
	\quad \text{as } n \to \infty,
\] 
where $S_X$ and $S_Y$ are stable random variables. Further, due to Lemma 2.2 in~\cite{Litvak2012} we have
\[
	\frac{1}{a_n}\sum_{i = 1}^n X_iY_i \dlim 0, \quad \frac{1}{a_n} \sum_{i = 1}^n X_i \dlim 0 
	\quad \text{and} \quad \frac{1}{a_n} \sum_{i = 1}^n Y_i \dlim 0 \quad \text{as } n \to \infty.
\]
Combining this we get
\[
	\frac{1}{\sqrt{a_n}}\sigma_-(\mathcal{G}^a_n) \dlim \sqrt{S_X + S_Y}, \quad
	\frac{1}{\sqrt{a_n}}\sigma^+(\mathcal{G}^a_n) \dlim \sqrt{S_X + a^2S_Y} \quad \text{as } n \to \infty, 
\]
and hence
\[
	\pearson^+_-(\mathcal{G}^a_n) \dlim \frac{S_X + aS_Y}{\sqrt{\vphantom{^2}S_X + S_Y}\sqrt{S_X + a^2S_Y}}
	\quad \text{as } n \to \infty,
\]
which has support on $(0, 1)$. Now, take $0 < \varepsilon \le 1$ and consider the function $f(x) : 
(0, \infty) \to \R$ defined as
\[
	f(x) = \frac{1 + ax}{\sqrt{\vphantom{^2}1 + x}\sqrt{1 + a^2x}}.
\]
This function attains its minimum in $1/a$ and by solving $f(1/a) = \varepsilon$ for $a$
we get that for
\[
	a = \frac{2 - \varepsilon^2 \pm \sqrt{1 - \varepsilon}}{\varepsilon^2}
\]
this minimum equals $\varepsilon$. If we now introduce the random variable $T = S_Y/S_X$ we see
that for $a$ defined as above $\frac{1 + aT}{\sqrt{\vphantom{^2}1 + T}\sqrt{\vphantom{^2}1 + a^2T}}$ has
support contained in $(\varepsilon, 1)$.

This example shows that Pearson's correlation coefficients $\pearson^\beta_\alpha$ can converge to a 
non-negative random variable in the infinite size network limit. This behavior is undesirable for if we
consider two instances of the same model $\mathcal{G}^a_n$ then the values of $\pearson^+_-$ will be random
and hence could be very far apart. Therefore $\pearson^+_-$ is not suitable for measuring the In/Out 
correlation if we would like to find one number (population value) that characterizes the In/Out correlation
in this model.

\section{Experiments}\label{sec:experiments}

In this section we present experimental results for the degree-degree correlations introduced in 
Sections~\ref{sec:pearson} and~\ref{sec:rankcorrelations}. For the calculations we used the WebGraph
framework~\cite{Boldi2004, Boldi2004a} and the fastutil package from The Laboratory for Web 
Algorithmics (LAW) at the Università degli studi di Milano, \url{http://law.di.unimi.it}. The 
calculations were executed on the Wikepedia graphs, \url{http://wikipedia.org}, of nine different 
languages, obtained from the LAW dataset database. For each Wikipedia graph we calculated all four 
degree-degree correlations using the four measures introduced in this paper. 

The in- and out-degree distributions of these networks satisfy conditions of scale-free distributions 
with parameters between 1 and 2.5. Moreover, we evaluated the dependency between in- and out-degrees 
of the vertices, using angular measure~\cite[p. 313]{Resnick2007}, and found them to be independent. 
Therefore one could consider the Wikipedia networks as being generated by a model satisfying the 
conditions of Definition~\ref{def:graphsequencespace}.

In an attempt to quantify the results we compared them to a randomized setting. For this we did 20 
reconfigurations of the degree sequences of each graph, using the scheme described in Section 4.2 
of~\cite{chen2013}. More precisely, we used the \emph{erased directed configuration model}. In this 
scheme we first assign to each vertex $v$, $\dout(v)$ outbound stubs and $\din(v)$ inbound stubs. Then 
we randomly select an available outbound stub and combine it with a inbound stub, selected uniformly 
at random from all available inbound stubs, to make an edge. When this edge is a self loop we remove it. 
When we end up with multiple edges between two vertices we combine them into one edge. Proposition 4.2 
of~\cite{chen2013} now tells us that the distribution of the degrees of the resulting simple graph will, 
with high probability, be the the same as the original distribution. For each of these reconfigurations,  
all four types of degree-degree dependencies were evaluated using the four measures discussed above,
and then for each dependency type and each measure we took the average. The results are presented in 
Table~\ref{tbl:wikiresults}.

The first observation is that for each Wikipedia graph and dependency type, the measures $\spearman$, 
$\spearmanaverage$ and $\kendall$ have the same sign while $\pearson$ in many cases has a different sign. 
Furthermore, there are many cases where the absolute value of the three rank correlations is at least an
order of magnitude larger than that of Pearson's correlation coefficients. See for instance the Out/In 
correlations for DE, EN, FR and NL or the In/Out correlation for KO and RU. 

These examples illustrate the fact that Pearson's correlation coefficients are scaled down by the high 
variance in the degree sequences which in turn gave rise to Theorem~\ref{thm:convergencepearson}, while 
the rank correlations do not have this deficiency. Another interesting observation is that the values for 
$\spearman$ and $\spearmanaverage$ are almost in full agreement with each other. This would then suggest 
that, looking back at equation \eqref{eq:spearmanuniformaverage}, that $3\overline{\sigma}_\alpha
\overline{\sigma}^\beta \approx |E|^3 - |E|$ for the Wikipedia networks. Therefore one could freely change 
between these two when calculating degree-degree correlations. Note that $\spearman$ is somewhat 
computationally easier than $\spearmanaverage$ because there is no need to compute $\overline{\sigma}_\alpha
\overline{\sigma}^\beta$.

Finally, we notice that for the configuration model instances of the graphs, all correlation measures are 
close to zero, and the difference between different realizations of the model is remarkably small (see the 
values of $\sigma$). However, at this point very little can be said about statistical significance of these 
results because, as we proved above, $\pearson$ shows pathological behavior on large power law graphs and 
the setting of directed graphs is very different from the setting of independent observations. This raises 
important and challenging questions for future research: which magnitude of degree-degree dependencies 
should be seen as significant and how to construct mathematically sound statistical tests for establishing 
such significant dependencies.

\section{Discussion}\label{sec:discussion}

From Theorem~\ref{thm:convergencepearson} and the examples in Section~\ref{sec:examples} it is clear 
that Pearson's correlation coefficients have undesirable properties, based on their limiting behavior 
when the graph size goes to infinity. The question of whether or not rank correlations converge to 
correct population values in infinite graph size limit, has not been addressed in this paper, but it 
can be already answered affirmatively. For undirected graphs, it has been proved in \cite{Litvak2012},
and the results for directed graphs are the subject of our current research and will be presented in 
our upcoming paper~\cite{Hoorn2014}. This provides sufficient motivation for using such rank correlation
measures instead of Pearson's correlation coefficients for measuring degree-degree dependencies in 
directed networks with heavy-tailed degrees.

Nevertheless, we have also seen that when using rank correlations one needs to be careful when 
resolving the ties amongst the degrees. Furthermore, Spearman's rho and Kendall's tau turn very skewed 
distributions into uniform ones, thus they do not detect the influence of important hubs, as we saw in 
the example of the $G_n^a$ graph in Section~\ref{ssec:bridgegraph}. Possibly, these measures should be 
considered in combination  with measures for extremal dependencies, such as angular measure. Angular 
measure for two vectors $(X_i)_{i=1,\ldots,n}$ and $(Y_i)_{i=1,\ldots,n}$ is a rank correlation measure 
that characterizes whether $X_i$ and $Y_i$ tend to attain extremely large values simultaneously. We 
used this measure to verify the independence between in- and out- degrees of a node in Wikipedia graphs.

There is also an intriguing question of whether the four types of dependencies are related to one 
another. For instance, it is reasonable to think that if the Out/In and Out/Out correlations are highly 
positive, then the other two must also be (highly) positive. Indeed, if we take a node $v$ with high 
in-degree then it tends to have nodes of high out-degree connecting to it. Hence, out-degree of $v$ tends 
to be high as well because of the high positive Out/Out dependency. Therefore, if $v$ connects to another 
node $w$, then $w$ tends to have large in- and out-degree implying positive In/In and In/Out dependencies. 
It is very interesting to understand what are the feasibility bounds for possible combinations of the 
four dependency types in terms of different correlation measures.

Finally, although the results from percolation theory and the analysis of network stability under attack
give some insights in the impact of degree assortativity, it remains an open question what specific values 
of degree-degree correlation measures mean for the topology of directed networks in general. This shows 
that there are still many fundamental questions regarding degree-degree correlations in scale-free 
directed graphs.

\par \bigskip

\noindent\textbf{Acknowledgments}
This work is supported by the EU-FET Open grant NADINE (288956).

\afterpage{%
	\clearpage
	\newgeometry{left=0pt, right=0pt}
	\begin{landscape}
		\begin{table}[t]%
			\centering
			\small
			\begin{tabular}{|c|c|r|rr|r|rr|r|rr|r|rr|}
			\hline
				& & \multicolumn{3}{|c|}{Pearson} & \multicolumn{3}{|c|}{Spearman uniform} 
				& \multicolumn{3}{|c|}{Spearman average} & \multicolumn{3}{|c|}{Kendall} \\
			\hline
				& & & \multicolumn{2}{|c|}{Randomized} & & \multicolumn{2}{|c|}{Randomized}
				& & \multicolumn{2}{|c|}{Randomized} & & \multicolumn{2}{|c|}{Randomized}\\
				\multicolumn{1}{|c|}{Graph} & \multicolumn{1}{|c|}{$\alpha$/$\beta$} 
				& \multicolumn{1}{|c|}{Data} & \multicolumn{1}{c}{$\mu$} & \multicolumn{1}{c|}{$\sigma$}
				& \multicolumn{1}{|c|}{Data} & \multicolumn{1}{c}{$\mu$} & \multicolumn{1}{c|}{$\sigma$}
				& \multicolumn{1}{|c|}{Data} & \multicolumn{1}{c}{$\mu$} & \multicolumn{1}{c|}{$\sigma$}
				& \multicolumn{1}{|c|}{Data} & \multicolumn{1}{c}{$\mu$} & \multicolumn{1}{c|}{$\sigma$} \\
			\hline
			\multirow{4}{*}{DE wiki}
				& +/- & -0.0552 & -0.0178 & 0.0001 
							& -0.1434 & -0.0059 & 0.0002 
							& -0.1435 & -0.0059 & 0.0002 
							& -0.0986 & -0.0038 & 0.0008 \\
				& -/+ & 0.0154 & -0.0030 & 0.0002 
							& 0.0481 & -0.0008 & 0.0002 
							& 0.0484 & -0.0008 & 0.0002 
							& 0.0.326 & -0.0005 & 0.0001 \\
				& +/+ & -0.0323 & -0.0091 & 0.0002 
							& -0.0640 & -0.0048 & 0.0002 
							& -0.0640 & -0.0048 & 0.0002 
							& -0.0446 & -0.0006 & 0.0001 \\
				& -/- & -0.0123 & -0.0060 & 0.0001 
							& 0.0119 & -0.0009 & 0.0002 
							& 0.0120 & -0.0009 & 0.0002 
							& 0.0074 & -0.0032 & 0.0001 \\
			\hline
			\multirow{4}{*}{EN wiki}
				& +/- & -0.0557 & -0.0180 & 0 
							& -0.1999 & -0.0064 & 0.0001 
							& -0.1999 & -0.0064 & 0.0001 
							& -0.1364 & -0.0043 & 0.0001 \\
				& -/+ & -0.0007 & -0.0015 & 0.0001 
							& 0.0239 & -0.0011 & 0.0001 
							& 0.0240 & -0.0011 & 0.0001 
							& 0.0163 & -0.0008 & 0.0001 \\
				& +/+ & -0.0713 & -0.0125 & 0.0001 
							& -0.0855 & -0.0053 & 0.0001 
							& -0.0855 & -0.0053 & 0.0001 
							& -0.0581 & -0.0035 & 0.0001 \\
				& -/- & -0.0074 & -0.0024 & 0.0001 
							& -0.0664 & -0.0013 & 0.0001 
							& -0.0666 & -0.0013 & 0.0001 
							& -0.0457 & -0.0009 & 0.0001 \\
			\hline
			\multirow{4}{*}{ES wiki}
				& +/- & -0.1031 & -0.0336 & 0.0002 
							& -0.1429 & -0.0186 & 0.0003 
							& -0.1429 & -0.0186 & 0.0003 
							& -0.0972 & -0.0126 & 0.0002 \\
				& -/+ & -0.0033 & -0.0071 & 0.0002 
							& -0.0407 & -0.0047 & 0.0003 
							& -0.0417 & -00048 & 0.0003 
							& -0.0294 & -0.0034 & 0.0002 \\
				& +/+ & -0.0272 & -0.0201 & 0.0002 
							& 0.0178 & -0.0125 & 0.0003 
							& 0.0178 & -0.0125 & 0.0003 
							& 0.0119 & -0.0084 & 0.0002 \\
				& -/- & -0.0262 & -0.0116 & 0.0001 
							& -0.1627 & -0.0071 & 0.0003 
							& -0.1669 & -0.0072 & 0.0003 
							& -0.1174 & -0.0051 & 0.0002 \\
			\hline
			\multirow{4}{*}{FR wiki}
				& +/- & -0.0536 & -0.0252 & 0.0001 
							& -0.1065 & -0.0123 & 0.0002 
							& -0.1065 & -0.0123 & 0.0002 
							& -0.0720 & -0.0083 & 0.0002 \\
				& -/+ & 0.0048 & -0.0031 & 0.0002 
							& 0.0119 & -0.0016 & 0.0003 
							& 0.0121 & -0.0016 & 0.0003 
							& 0.0085 & -0.0011 & 0.0002 \\
				& +/+ & -0.0512 & -0.0173 & 0.0002 
							& -0.0126 & -0.0093 & 0.0002 
							& -0.0126 & -0.0090 & 0.0015 
							& -0.0087 & -0.0063 & 0.0001 \\
				& -/- & -0.0094 & -0.0054 & 0.0001 
							& -0.0262 & -0.0021 & 0.0003 
							& -0.0267 & -0.0025 & 0.0015 
							& -0.0186 & -0.0015 & 0.0002 \\
			\hline
			\multirow{4}{*}{HU wiki}
				& +/- & -0.1048 & -0.0378 & 0.0003 
							& -0.1280 & -0.0220 & 0.0006 
							& -0.1280 & -0.0220 & 0.0006 
							& -0.0877 & -0.0148 & 0.0004 \\
				& -/+ & 0.0120 & -0.0056 & 0.0005 
							& 0.0525 & 0.0002 & 0.0005 
							& 0.0595 & 0 & 0.0006 
							& 0.0442 & 0 & 0.0004 \\
				& +/+ & -0.0579 & -0.0261 & 0.0005 
							& -0.0207 & -0.0157 & 0.0005 
							& -0.0207 & -0.0157 & 0.0004 
							& -0.0140 & -0.0107 & 0.0003 \\
				& -/- & -0.0279 & -0.0084 & 0.0004 
							& 0.0051 & 0.0004 & 0.0005 
							& 0.0060 & 0.0002 & 0.0006 
							& 0.0050 & -0.0001 & 0.0005 \\
			\hline
			\multirow{4}{*}{IT wiki}
				& +/- & -0.0711 & -0.0319 & 0.0001 
							& -0.0964 & -0.0158 & 0.0002 
							& -0.0964 & -0.0158 & 0.0002 
							& -0.0653 & -0.0106 & 0.0002 \\
				& -/+ & 0.0048 & -0.0031 & 0.0002 
							& 0.0468 & -0.0013 & 0.0002 
							& 0.0469 & -0.0013 & 0.0003 
							& 0.0319 & -0.0009 & 0.0002 \\
				& +/+ & -0.0704 & -0.0204 & 0.0002 
							& -0.0277 & -0.0121 & 0.0002 
							& -0.0277 & -0.0122 & 0.0002 
							& -0.0189 & -0.0081 & 0.0001 \\
				& -/- & -0.0115 & -0.0050 & 0.0001 
							& -0.0428 & -0.0016 & 0.0002 
							& -0.0429 & -0.0016 & 0.0002 
							& -0.0296 & -0.0011 & 0.0002 \\
			\hline
			\multirow{4}{*}{KO wiki}
				& +/- & -0.0805 & -0.0562 & 0.0004 
							& -0.2696 & -0.0476 & 0.0037 
							& -0.2722 & -0.0482 & 0.0038 
							& -0.1985 & -0.0328 & 0.0073 \\
				& -/+ & 0.0157 & -0.0009 & 0.0030 
							& 0.1760 & 0.0019 & 0.0046 
							& 0.2323 & 0.0034 & 0.0046 
							& 0.1902 & 0.0031 & 0.0035 \\
				& +/+ & -0.1697 & -0.0357 & 0.0035 
							& 0.0016 & -0.0267 & 0.0041 
							& 0.0191 & -0.0272 & 0.0040 
							& 0.0170 & 0.0298 & 0.0415 \\
				& -/- & -0.0138 & -0.0034 & 0.0015 
							& -0.0493 & 0.0062 & 0.0045 
							& -0.0618 & 0.0083 & 0.0042 
							& -0.0463 & 0.0065 & 0.0032 \\
			\hline
			\multirow{4}{*}{NL wiki}
				& +/- & -0.0585 & -0.0346 & 0.0001 
							& -0.3017 & -0.0211 & 0.0002 
							& -0.3018 & -0.0211 & 0.0002 
							& -0.2089 & -0.0142 & 0.0002 \\
				& -/+ & 0.0100 & -0.0025 & 0.0003 
							& 0.0727 & -0.0007 & 0.0003 
							& 0.0730 & -0.0007 & 0.0003 
							& 0.0504 & -0.0004 & 0.0003 \\
				& +/+ & -0.0628 & -0.0194 & 0.0001 
							& 0.0016 & -0.0104 & 0.0003 
							& 0.0016 & -0.0104 & 0.0003 
							& 0.0015 & -0.0070 & 0.0002 \\
				& -/- & -0.0233 & -0.0091 & 0.0001 
							& -0.1498 & -0.0019 & 0.0003 
							& -0.1505 & -0.0019 & 0.0003 
							& -0.1048 & -0.0013 & 0.0002 \\
			\hline
			\multirow{4}{*}{RU wiki}
				& +/- & -0.0911 & -0.0225 & 0.0004 
							& -0.1080 & -0.0093 & 0.0015 
							& -0.1084 & -0.0093 & 0.0015 
							& -0.0755 & -0.0064 & 0.0010 \\
				& -/+ & 0.0398 & -0.0006 & 0.0009 
							& 0.1977 & 0 & 0.0008 
							& 0.2200 & 0.0001 & 0.0009 
							& 0.1655 & 0.0001 & 0.0007 \\
				& +/+ & 0.0082 & -0.0038 & 0.0010 
							& 0.2472 & 0.0002 & 0.0015 
							& 0.2480 & 0.0001 & 0.0015 
							& 0.1736 & 0.0001 & 0.0010 \\
				& -/- & -0.0242 & -0.0030 & 0.0007 
							& 0.0236 & 0.0009 & 0.0011 
							& 0.0255 & 0.0007 & 0.0015 
							& 0.0187 & 0.0006 & 0.0007 \\
			\hline
			\end{tabular}
			\caption{Degree-degree correlations for Wikipedia graphs. The data in the columns Randomized correspond to
			the results for the reconfigurations of the given Wikipedia network.}
			\label{tbl:wikiresults}
		\end{table}
	\end{landscape}
}
\restoregeometry

\clearpage

\bibliographystyle{plain}
\bibliography{degreedegreecorrelations}

\end{document}